\documentclass[12pt,a4paper]{amsart}
\calclayout
\numberwithin{equation}{section}
\allowdisplaybreaks
\theoremstyle{plain}

\newtheorem{theorem}{Theorem}[section]
\newtheorem{lemma}[theorem]{Lemma}
\newtheorem{prop}[theorem]{Proposition}

\newtheorem{corollary}[theorem]{Corollary}
\newtheorem{remark}{Remark}

\numberwithin{equation}{section}

\usepackage{enumerate}

\usepackage{amssymb}
\usepackage{mathtools}
\mathtoolsset{showonlyrefs}
\usepackage{mathrsfs}
\usepackage{comment}
\usepackage{hyperref}
\usepackage{xcolor}

\def\P{\mathbb{P} }

\def\R{\mathbb{R} }
\def\d{\mathrm{d}}
\def\D{\mathcal{D} }
\def\E{\mathbb{E} }
\def\e{\mathcal{E}}
\def\F{\mathcal{F}}
\def\1{\mathbf{1}}
\def\d{\mathrm{d}}
\def\bP{\mathbf{P}}
\def\bE{\mathbf{E}}
\def\q{q}

\begin{document}
	\title[Killed BBM with drift]{From killed BBM with drift to
	BBM: the extremal process
	}
	\author[Y.-X. Ren, R. Song and F. Yang]{Yan-Xia Ren, Renming Song and Fan Yang}
	\address{Yan-Xia Ren\\ LMAM School of Mathematical Sciences \& Center for
		Statistical Science\\ Peking University\\ Beijing 100871\\ P. R. China}
	\email{yxren@math.pku.edu.cn}
	\thanks{}
	\address{Renming Song\\ Department of Mathematics\\ University of Illinois at Urbana-Champaign \\ Urbana \\ IL 61801\\ USA}
	\email{rsong@illinois.edu}
	\address{Fan Yang\\ School of Mathematical Sciences \\ Beijing University of Posts and Telecommunications\\ Beijing 100876 \& Key Laboratory of Mathematics and Information Networks (Beijing University of Posts and Telecommunications) \\ Ministry of Education \\  P. R. China}
	\email{fan-yang@bupt.edu.cn}
	\thanks{}

	\begin{abstract}
       In this paper, we study the asymptotic behavior of the extreme of a standard one-dimensional branching Brownian motion (BBM) with drift $-\rho>-\sqrt2$ and  absorbing barrier at level $-x$. We prove  that the two-dimensional point process, with first component being the extremal process of the BBM and the second component being the running minimum of the BBM with drift, converges weakly to a decorated Poisson point process (DPPP) on $\mathbb{R} \times [0, \infty)$.  This framework allows us to explicitly derive the limit, as $t\to\infty$, of the extremal process of the killed BBM killed at level $-x$, demonstrating that the  double limit, when $t\to\infty$ first and then $x\to\infty$, of the extremal process of      the BBM with drift $-\rho$ and killed at level $-x$ coincides with the limit of the extreme of (un-killed) BBM up to a multiplicative constant factor.
	\end{abstract}
	\subjclass[2020]{Primary: 60J80; Secondary: 60G70}
	
	\keywords{Branching Brownian motion; extremal process; Poisson point process}
	\maketitle
	\section{Introduction}\label{Sec1}
	\subsection{Background}\label{Sec1.1}
	A classical branching Brownian motion (BBM) in $\R$ starts with a single particle at the origin, which moves as a 1-dimensional standard Brownian motion $(\{B(t)\}_{ t\geq 0}, \mathbf{P})$. After an exponential time with parameter $1$, independent of the spatial motion, it dies and gives birth to a random number $L$ 	offspring, with ${\rm P}(L=k)=p_k$, $k=0, 1, \dots$. Starting from their birth place, each of these particles evolves independently, according to the same spatial motion and following the same branching mechanism. We denote by $N_t$ the collection of particles alive at time $t$. For any $u\in N_t$ and $s\le t$, let $X_u(s)$ be the position of $u$ at time $s$ or the position of the ancestor of $u$ at time $s$.
	The process
    \begin{equation}\label{def_X_t}
    	\mathbb{X}_t:=\sum_{u\in N_t}\delta_{X_u(t)}
    \end{equation}
    is called a branching Brownian motion.
    Define $\F_t := \sigma(\mathbb{X}_s:s\leq t)$.
    We will  use $\P$ to denote the law of this BBM and use $\E$ to denote expectation with respect to $\P$.

    In this paper, we will always assume that
    \begin{equation}
    	 {\rm E} L = 2, \quad {\rm E} L^2<\infty.
    \end{equation}
    When $p_2=1$, $\mathbb{X}$ is called a dyadic BBM.
	
    Let $M_t:= \max_{u\in N_t} \{X_u(t)\}$ be the maximum of the BBM $\mathbb{X}$ at time $t$.
    The asymptotic behavior of $M_t$ has attracted considerable interest for decades. Bramson \cite{Bramson78, Bramson83} proved that
	\begin{equation}\label{eq:Mt_speed}
		\lim_{t\to\infty}\P(M_t\le m_t+z)=\lim_{t\to \infty} u(t,m_t+z)=w(z),\quad z\in \R,
	\end{equation}
	where
	\begin{equation}
		m_t:=\sqrt{2}t-\frac{3}{2\sqrt{2}}\log t,
	\end{equation}
	and $w$ solves the ordinary differential equation
	\begin{equation*}
		\frac{1}{2}w''+\sqrt{2}w'+f(w)-w=0,
	\end{equation*}
    where $f(z)=\sum^{\infty}_{n=0}p_n z^n$. 	Lalley and Sellke \cite{Lalley87} provided the following representation of $w$ for dyadic BBM
	\begin{equation}\label{travelling}
		w(z):=\E\left[e^{-C_* e^{-\sqrt{2} z}Z_{\infty}}\right],
	\end{equation}
	where $C_*$ is a positive constant and $Z_{\infty}$ is the limit of the derivative martingale
	\begin{equation}\label{def_derivative}
		Z_t = \sum_{u\in N_t} (\sqrt{2}t - X_u(t)) e^{\sqrt{2}(X_u(t)-\sqrt{2}t) }.
	\end{equation}
	Moreover, it is known that
	\begin{equation}\label{eq:w_asymptotic}
		\lim_{x\rightarrow\infty} \frac{1-w(x)}{C_*xe^{-\sqrt{2}x}} = 1.
	\end{equation}

    Study on more complete structure of the BBM seen from its tip was independently initiated
    by  A\"{i}d\'{e}kon, Berestycki, Brunet and Shi \cite{ABBS13} and Arguin, Bovier and Kistler \cite{ABK13}. Define the  extremal process of BBM $\mathbb{X}_t$ by
	\begin{equation}
		\mathcal{E}_t:=\sum_{u\in N_t} \delta_{X_u(t)-m_t}.
	\end{equation}
    It was shown in \cite{ABBS13} and \cite{ABK13} that, as $t\to\infty$,  $\mathcal{E}_t$ converges in law to a random shifted decorated Poisson point process (DPPP). A DPPP $\e$ is determined by two components: an intensity measure $\mu$, which is a (random) measure on $\R$, and a decoration process. Conditioned on $\mu$, let $\sum_{i} \delta_{p_i}$ be a Poisson point process with intensity $\mu$, and let $\{\sum_j \delta_{d^i_j}\}$ be a family of independent point processes with law $\D$. Then $\e = \sum_{i,j} \delta_{p_i+d^i_j}$ is a DPPP with intensity $\mu$ and decoration $\D$, denoted by DPPP$(\mu, \D)$. A\"{i}d\'{e}kon et al. \cite{ABBS13} and Arguin et al. \cite{ABK13} proved  that
	\begin{equation}
		\lim_{t\rightarrow\infty}
		\mathcal{E}_t
		= \mathrm{DPPP}\left(\sqrt{2}C_*Z_{\infty}e^{-\sqrt{2}x}\d x, \D^{\sqrt{2}}\right) \mbox{ in law },
	\end{equation}
	with  $C_*$ as in \eqref{travelling} and
	\begin{equation}\label{def_D}
		\D^{\sqrt{2}}(\cdot) := \lim_{t\rightarrow\infty} \P\left( \sum_{u\in N_t} \delta_{X_u(t)-M_t} \in \cdot \, \Big| M_t \geq \sqrt{2}t \right).
	\end{equation}

    In this paper, we study branching Brownian motion with drift.
    For any $\rho\in \R$, the process
    \begin{equation}\label{def_X^rho_t}
         \mathbb{X}^{-\rho}_t:=\sum_{u\in N_t}\delta_{X_u(t)-\rho t}
    \end{equation}
    is a branching Brownian motion with
    drift $-\rho$.

    Let $x>0$ and define
    \begin{equation}
		\widetilde{N}_t^{x, -\rho} := \{u\in N_t: \forall s\leq t, X_u(s)-\rho s > -x  \}.
	\end{equation}
    $\widetilde{N}_t^{x, -\rho}$ is the subset of $N_t$ consisting of all the particles
    in BBM with drift $-\rho$ that have not hit $-x$ by time $t$.
    The process
    \begin{equation}
    	\mathbb{Y}^{x, -\rho}_t:=\sum_{u\in \widetilde{N}_t^{x, -\rho}}\delta_{X_u(t)-\rho t}
    \end{equation}
    is called a branching Brownian motion with drift $-\rho$ killed upon reaching $-x$.
    According to Kesten \cite{Kesten78}, this
    process dies out almost surely when $\rho \ge \sqrt{2}$ while it survives with positive probability when $\rho < \sqrt{2}$. Therefore, $\rho = \sqrt{2} $ is the critical drift separating the supercritical case $\rho < \sqrt{2}$ and the subcritical $\rho > \sqrt{2}$.

    In this paper, for $u\in N_t$, $X_u(t)$ is always the position of particle $u$ at time $t$ in a BBM (without drift).
    Note that for $u\in N_t$,
    \begin{equation}\label{e:rs-rel}
    	(X_u(t) - \rho t) - (m_t - \rho t) = X_u(t) - m_t.
    \end{equation}
    Thus,
    the center function for  $\mathbb{X}^{-\rho}_t$
    is $m^{-\rho}_t:=m_t-\rho t$. It is proved in
    \cite{YZ24}
    that the center function for $\mathbb{Y}^{x,-\rho}_t$ is also $m^{-\rho}_t$.

    The extremal process of $\mathbb{X}_t$
	\begin{equation}
		\sum_{u\in N_t}  \delta_{X_u(t)-m_t}
	\end{equation}
	is also the extremal process of $\mathbb{X}^{-\rho}_t$, the BBM with drift $-\rho$,
    and
    \begin{equation}
    \sum_{u\in \widetilde{N}_t^{x, -\rho}}\delta_{
	X_u(t)-m_t}
    \end{equation}
    is the extremal process of $\mathbb{Y}^{x, -\rho}$.

	It is natural to ask what the limiting behavior of the limit of the extremal process of $\mathbb{Y}^{x, -\rho}$ is as $x\to\infty$, and whether this limit coincides with the limit of extremal process of BBM. To accomplish this, we will study the convergence of the following point process
	\begin{align}
		\sum_{u\in N_t}  \delta_{\left(X_u(t)-m_t,-\inf\limits_{s\leq t} \{X_u(s)-\rho s\} \right)}.
	\end{align}

	\subsection{Main result}
	To state our main result, we first
	introduce a truncated version of the derivative martingale. For $0\leq s\leq t$ and $x > 0$, we define
	\begin{equation}\label{def_truncated_derivative_rho}
        Z^{-\rho}(x,s,t) := \sum_{u\in N_t}
		\left( \sqrt{2}t-X_u(t)\right) e^{\sqrt{2}(X_u(t)-\sqrt{2}t)} \1_{\left\{ \inf_{r\leq s} \{X_u(r)-\rho r\} >-x
		\right\} }.
	\end{equation}
      Note that $Z^{-\rho}(\infty,s, t)=Z_t$ for any $t\geq s\geq 0$. Since $\{Z_t, t\geq 0\}$ is the derivative  martingale, and $\1_{\left\{ \inf_{r\leq s} \{X_u(r)-\rho r\} >-x 		\right\} }$ does not depend on $t$, for fixed $x>0$ and $s>0$, $\{Z^{-\rho}(x,s,t), t>s; \P\}$ is a martingale. We call $Z^{-\rho}(x,s,t)$ a  ``truncated" derivative martingale. We have the following result for this ``truncated" derivative martingale.
	
	\begin{prop}\label{prop_truncated_derivative}
    Suppose $\rho<\sqrt{2}$. For any $x>0$, the limit  $\lim_{s\rightarrow\infty}\lim_{t\rightarrow\infty} Z^{-\rho}(x,s,t)$ exists $\P$-almost surely. Let
		\begin{equation}\label{def_Z(x)_rho}
			Z^{-\rho}(x) := \lim_{s\rightarrow\infty} \lim_{t\rightarrow\infty} Z^{-\rho}(x,s,t).
		\end{equation}
	Then $Z^{-\rho}(x)$ is non-decreasing in $x$,
	and $0\leq Z^{-\rho}(x) \leq Z_{\infty}$.
	\end{prop}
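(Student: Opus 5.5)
The plan is to first take $t\to\infty$ with $s$ and $x$ fixed, then $s\to\infty$. For fixed $s$ we use the branching property at time $s$. Write $A_s(x):=\{u\in N_s: \inf_{r\le s}(X_u(r)-\rho r)>-x\}$. For each $u\in N_s$, let $Z^{u}_{t-s}$ be the derivative martingale of the subtree rooted at $u$, shifted to start from $0$. Then, for $t>s$,
\begin{equation}
Z^{-\rho}(x,s,t)=\sum_{u\in A_s(x)} e^{\sqrt2(X_u(s)-\sqrt2 s)}\Bigl[Z^{u}_{t-s}+(\sqrt2 s-X_u(s))W^{u}_{t-s}\Bigr],
\end{equation}
where $W^{u}_r=\sum_{v}e^{\sqrt2 (X_v(r)-\sqrt2 r)}$ is the critical additive martingale of the subtree. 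This follows by splitting $\sqrt2 t-X_v(t)=(\sqrt2(t-s)-(X_v(t)-X_u(s)))+(\sqrt2 s-X_u(s))$.

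Since $A_s(x)$ is a finite set, we can pass to the limit term by term. It is classical that $Z^u_r\to Z^u_\infty$ a.s. with $Z^u_\infty>0$ (under ${\rm E}L^2<\infty$), and that $W^u_r\to0$ a.s. Hence $\lim_{t\to\infty}Z^{-\rho}(x,s,t)$ exists a.s. and equals
\begin{equation}
Z^{-\rho}(x,s,\infty)=\sum_{u\in A_s(x)} e^{\sqrt2(X_u(s)-\sqrt2 s)}Z^{u}_\infty .
\end{equation}
Taking $x=\infty$ gives the standard decomposition $Z_\infty=\sum_{u\in N_s}e^{\sqrt2(X_u(s)-\sqrt2 s)}Z^u_\infty$ a.s.

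Next comes monotonicity in $s$. If $s<s'$, then every $v\in A_{s'}(x)$ has its ancestor at time $s$ in $A_s(x)$. Applying the decomposition identity for the subtree of $u\in A_s(x)$ between times $s$ and $s'$, we get
\begin{equation}
Z^{-\rho}(x,s,\infty)=\sum_{v\in N_{s'},\,v\ge u\in A_s(x)} e^{\sqrt2(X_v(s')-\sqrt2 s')}Z^{v}_\infty \ \ge\ Z^{-\rho}(x,s',\infty),
\end{equation}
because each summand is nonnegative and the right-hand sum is over a subset of these indices. Thus $s\mapsto Z^{-\rho}(x,s,\infty)$ is a.s. non-increasing and bounded below by $0$, so its limit $Z^{-\rho}(x)$ exists a.s. We need the identities to hold simultaneously for all $s$, since there are uncountably many. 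To handle this, restrict first to rational $s$, where countably many null sets are harmless. Then note that the indicator $\1_{\{\inf_{r\le s}\cdots>-x\}}$ is monotone in $s$, so $Z^{-\rho}(x,s,\infty)$ is monotone along all $s$ once it is defined through the limits. The resulting limit is the same as the one along rationals.

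The bounds and monotonicity in $x$ follow from the same representation. We have $0\le Z^{-\rho}(x,s,\infty)\le Z_\infty$, since $A_s(x)\subset N_s$ and the $x=\infty$ sum equals $Z_\infty$. Also $A_s(x)\subset A_s(x')$ for $x<x'$, so $Z^{-\rho}(x,s,\infty)$ is non-decreasing in $x$. Both properties pass to the limit as $s\to\infty$.

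The main obstacle is justifying the a.s. limits of $Z^u$ and $W^u$ simultaneously for all relevant $u$ and $s$. This is handled by countability: use rational $s$ and the countably many individuals in the Ulam–Harris labelling. One must also check the positivity of the summands, which relies on $Z^u_\infty\ge0$. The assumption $\rho<\sqrt2$ is not really needed for existence. It would only matter for nontriviality ($Z^{-\rho}(x)>0$ with positive probability), which is not claimed here.
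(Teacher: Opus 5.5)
Your proof is correct, but it takes a different route from the paper's.

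\textbf{The paper's route.} The paper does not decompose at time $s$. Instead it introduces the truncated killed martingale $V^z(x,s,t)$: the summands of $V^z_t$ restricted to particles with $\inf_{r\le s}(X_u(r)-\rho r)\ge -x$. This is a nonnegative martingale in $t>s$, hence converges a.s. On the event $\gamma^{(z,\sqrt2)}$ it differs from $Z^{-\rho}(x,s,t)$ by at most $zW_t\to0$, so there $Z^{-\rho}(x,s)$ exists and equals $V^z(x,s)\ge0$. Monotonicity in $s$ and $x$, and the bound by $V^z_\infty=Z_\infty$, follow from the nonnegativity of the summands of $V^z$. The paper then concludes using $\P(\gamma^{(z,\sqrt2)})\uparrow1$.

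\textbf{Your route.} Your branching decomposition takes three facts as black boxes: convergence of $Z_t$, $W_t\to0$, and $Z_\infty\ge0$. The last is usually proved by exactly this $V^z$/$\gamma^{(z,\sqrt2)}$ device, so the paper's argument is more self-contained. In return you get the explicit representation $Z^{-\rho}(x,s)=\sum_{u\in A_s(x)}e^{\sqrt2(X_u(s)-\sqrt2 s)}Z^u_\infty$. Under it, monotonicity and the bound $\le Z_\infty$ reduce to set inclusions. The paper's choice has a different advantage: $V^z(x,s,t)$ is reused in Proposition \ref{prop:proportion}(ii) for the spine computation of $\E[Z^{-\rho}(x)/Z_\infty]$.

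\textbf{Uniformity in $s$.} You also address uniformity over uncountably many $s$, which the paper passes over. Your phrasing is slightly loose, because your $Z^u_{t-s}$ is started at time $s$ and so depends on $s$. The cleanest formulation is
\[
Z^{-\rho}(x,s,t)=\sum_{u\in A_s(x)}\sum_{v\in N_t,\,v\succeq u}(\sqrt2 t-X_v(t))e^{\sqrt2(X_v(t)-\sqrt2 t)}.
\]
The inner sums depend only on the Ulam--Harris label $u$, not on $s$. So one countable family of a.s. limits serves all $s$ at once.

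Finally, only $Z^u_\infty\ge0$ is actually used, so you need not invoke strict positivity.
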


    We use $Z^{-\rho}(\d x)$  to denote the  Lebesgue-Stieltjes  measure induced by  $\{Z^{-\rho}(x), x>0\}$. Now, we are ready to give the main result of the paper.

	\begin{theorem}\label{thrm1}
		For any $\rho<\sqrt{2}$, we have that,  under $\P$
		\begin{align}
			\lim_{t\rightarrow\infty} \sum_{u\in N_t} \delta_{\left(X_u(t)-m_t,-\inf\limits_{s\leq t} \{X_u(s)-\rho s\} \right)} = \mathrm{DPPP}(\sqrt{2}C_*e^{-\sqrt{2}y}\d y \times \d Z^{-\rho}, \widetilde{\D}^{\sqrt{2}}) \mbox{ in law, }
		\end{align}
		where $C_*$ is as in \eqref{travelling},  
		      and $\widetilde{\D}^{\sqrt{2}}$ is the law of $\sum_j \delta_{(d_j,0)}$ with $\sum_j \delta_{d_j}$ having the law $\D^{\sqrt{2}}$ given by \eqref{def_D}.
	\end{theorem}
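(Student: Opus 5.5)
We prove convergence of Laplace functionals. For nonnegative continuous $\phi$ on $\R\times[0,\infty)$ with compact support in the first coordinate (support bounded below in $y$), we show
\begin{equation}
\E\Big[e^{-\sum_{u\in N_t}\phi(X_u(t)-m_t,\,-\inf_{s\le t}\{X_u(s)-\rho s\})}\Big]\to \E\Big[\exp\Big(-\int\!\!\int \sqrt2 C_* e^{-\sqrt2 y}\,\big(1-\E_{\D}[e^{-\sum_j\phi(y+d_j,x)}]\big)\,\d y\, Z^{-\rho}(\d x)\Big)\Big].
\end{equation}
The key heuristic is that the running minimum of an extremal particle is attained at a time of order $O(1)$. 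An extremal particle at time $t$ has trajectory close to $s\mapsto \sqrt2 s$ at intermediate times, so $X_u(s)-\rho s\approx(\sqrt2-\rho)s\to\infty$. Hence $\inf_{r\le t}\{X_u(r)-\rho r\}=\inf_{r\le s}\{X_u(r)-\rho r\}$ with high probability for large fixed $s$. Since $\rho<\sqrt2$, this is where the hypothesis enters.

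\textbf{Step 1: localization of the minimum.} Fix $s$ and $A>0$. We show
\begin{equation}
\limsup_{s\to\infty}\limsup_{t\to\infty}\P\big(\exists u\in N_t:\ X_u(t)\ge m_t-A,\ \inf_{s\le r\le t}\{X_u(r)-\rho r\}<\inf_{r\le s}\{X_u(r)-\rho r\}\wedge 0\big)=0.
\end{equation}
It suffices to bound, via a first moment (many-to-one) estimate, the probability that some particle ending near $m_t$ dips below level $(\sqrt2-\rho)r/2$ at a time $r\in[s,t]$. We use the standard Brownian-bridge / ballot estimate that extremal particles stay above $\sqrt2 r - r^{1/2+\epsilon}$ for $r\in[s,t-s]$ (Arguin–Bovier–Kistler entropic repulsion), together with the a.s. fact that $\min_{u\in N_r}(X_u(r)-\rho r)\to\infty$ for $\rho<\sqrt2$. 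The latter handles times near $t$: since $\min_u X_u(r)\approx-\sqrt2 r$, we need $X_u(r)-\rho r$ large near $r\approx t$. That follows because near time $t$ an extremal particle is within $O(\log t)$ of $\sqrt2 t$, which is far above $\rho t$.

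\textbf{Step 2: conditioning at time $s$.} On the localization event, the second coordinate of $u$ equals $-\inf_{r\le s}\{X_v(r)-\rho r\}=:x_v$ for its ancestor $v\in N_s$, up to a positive-part correction. This correction is harmless because $-\inf_{r\le t}\ge0$ always and the infimum at $r=0$ is $0$. Conditioning on $\F_s$, the process splits into independent BBMs started from $X_v(s)$, $v\in N_s$, each carrying a frozen label $x_v$. Applying the known convergence of $\e_t$ (ABBS/ABK) to each subtree, shifted by $X_v(s)-\sqrt2 s$ with $m_t-m_{t-s}\to\sqrt2 s$, gives
\begin{equation}
\E\Big[\prod_{v\in N_s}\E_{X_v(s)}[\cdots]\Big]\approx \E\Big[\exp\Big(-\sum_{v\in N_s}\int \Psi_\phi(y,x_v)\,\d\big(\text{shifted intensity}\big)\Big)\Big].
\end{equation}
Using the Lalley–Sellke type asymptotic \eqref{eq:w_asymptotic}, i.e. the tail $1-w$, in its functional form ($1-\E[e^{-\langle\e_t,\phi(\cdot+z)\rangle}]\sim C(\phi)(-z)e^{\sqrt2 z}$ as $z\to-\infty$), the exponent becomes
\begin{equation}
\sum_{v\in N_s}\big(\sqrt2 s-X_v(s)\big)e^{\sqrt2(X_v(s)-\sqrt2 s)}\,C(\phi(\cdot,x_v)),
\end{equation}
with $C(\psi)=\int\sqrt2C_*e^{-\sqrt2y}(1-\E_\D e^{-\langle\D,\psi(y+\cdot)\rangle})\d y$. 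Equivalently, letting $t\to\infty$ first, the subtree derivative martingales give
\begin{equation}
\int C(\phi(\cdot,x))\,Z^{-\rho}(\d x;s,\infty),
\end{equation}
where $Z^{-\rho}(x;s,\infty)=\lim_t Z^{-\rho}(x,s,t)$, i.e. the measure $x\mapsto \sum_v \lim_t(\text{subtree derivative martingale})\1_{\{x_v<x\}}$. This avoids needing $s$ large for the tail approximation, since one can take $t\to\infty$ at fixed $s$ using the exact conditional limit.

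\textbf{Step 3: $s\to\infty$.} By Proposition \ref{prop_truncated_derivative}, $Z^{-\rho}(x;s,\infty)\to Z^{-\rho}(x)$ a.s. for each $x$, hence also at continuity points. Dominated by $Z_\infty$ and with $x\mapsto C(\phi(\cdot,x))$ bounded and continuous, we get weak convergence of measures of total mass at most $Z_\infty$, which may require tightness in $x$. It is not needed because $\phi$ is bounded and $C(\phi)\le C\|\phi\|$. Bounded convergence then passes the limit inside the expectation, producing the DPPP Laplace functional with intensity $\sqrt2C_*e^{-\sqrt2y}\d y\times \d Z^{-\rho}$ and decoration $\widetilde\D^{\sqrt2}$.

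The main obstacle is Step 1: the uniform-in-$t$ estimate that particles near the tip at time $t$ do not set a new minimum of $X-\rho\cdot$ after time $s$. It must be combined with justifying that the second coordinate is truly frozen while interchanging $t\to\infty$ with the label assignment, since $\phi$ need not vanish for large second coordinate.
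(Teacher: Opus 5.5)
Your argument is correct for test functions $\phi\in C_c^+(\R\times\R_+)$, which is all the vague convergence in the theorem needs. It takes a genuinely different route from the paper.

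For the localization (your Step~1, the paper's Lemma~\ref{lemma:gamma}) you invoke the Arguin--Bovier--Kistler path-localization (entropic repulsion) theorem. The paper instead splits according to whether $X_u(s)\le(\rho+\delta)s$. It disposes of that case with many-to-one and the tail bound \eqref{eq:M_tail} for $M_{t-s}$, and controls later crossings of the line $r\mapsto\rho r$ with the stopping-line estimate of Lemma~\ref{lemma:technical}.

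For the identification of the limit, the paper follows Bovier--Hartung. It first proves PPP convergence of the thinned process of local maxima (Proposition~\ref{prop:q-thinning}) by conditioning on $\F_q$ and using \eqref{eq:w_asymptotic} as $q\to\infty$. It then reattaches the decorations via the cluster separation of Arguin--Bovier--Kistler and the joint convergence of maximum and decoration of A\"{i}d\'{e}kon et al., with the triple limit $t\to\infty$, $q\to\infty$, $s\to\infty$.

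You instead apply the full DPPP theorem as a black box to each of the finitely many subtrees rooted in $N_s$. You use the exact identity $\lim_{t\to\infty}Z^{-\rho}(x,s,t)=\sum_{v\in N_s,\,-\gamma_v(s)<x}e^{\sqrt2(X_v(s)-\sqrt2 s)}Z^{(v)}_\infty$, where $Z^{(v)}_\infty$ is the derivative-martingale limit of the subtree of $v$ (the additive-martingale part vanishes). This gives, at each fixed $s$, the exact limit $\mathrm{DPPP}(\sqrt2C_*e^{-\sqrt2y}\d y\times Z^{-\rho}(\d x;s),\widetilde{\D}^{\sqrt2})$ without tail asymptotics, thinning or genealogy. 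Only the limit $s\to\infty$ remains, handled by Proposition~\ref{prop_truncated_derivative}.

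Your route is shorter and more transparent. The paper's yields the local-maxima statement \eqref{eq:converge_q_thinnig} as a by-product, and derives the localization from the tail of $M_t$ rather than the finer path-localization theorem. Your ``equivalently'' between the Lalley--Sellke exponent and the exact one is only an asymptotic equivalence as $s\to\infty$; the exact version, which you adopt, is the right one.

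Two justifications need correcting.

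In Step~1, the ``a.s.\ fact'' $\min_{u\in N_r}(X_u(r)-\rho r)\to\infty$ is false. The minimum of BBM is near $-\sqrt2 r$, so this quantity tends to $-\infty$ for every $\rho>-\sqrt2$; only ancestral paths of particles ending near $m_t$ matter. Moreover, a bare first-moment bound cannot be uniform in $t$, because the expected number of particles within $O(1)$ of $m_t$ is of order $t$. The path localization on $[r_0,t-r_0]$ (or the paper's conditioning at time $s$) is therefore essential. The window $[t-r_0,t]$ needs a separate, easy first-moment bound, since it requires a dip of order $t$ within bounded time.

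In Step~3, the claim that tightness is unnecessary because $\phi$ is bounded is wrong for the class you announced, where $\phi$ need not vanish for large second coordinate. Each $Z^{-\rho}(\d x;s)$ has total mass exactly $Z_\infty$. So for $\phi(y,x)=g(y)$ your claimed limit is equivalent to $\lim_{x\to\infty}Z^{-\rho}(x)=Z_\infty$, which is Proposition~\ref{prop:proportion}(iii). The paper gets this from the spine/Bessel-3 computation. It can also be extracted from Lemma~\ref{lemma:technical} with $x=K$, $y=K-A$, which makes particles ending above $m_t-A$ with running minimum below $-K$ rare, uniformly in $s$ and large $t$. For $\phi\in C_c^+(\R\times\R_+)$, vague convergence of $Z^{-\rho}(\d x;s)$ to $Z^{-\rho}(\d x)$ follows from Proposition~\ref{prop_truncated_derivative} alone, so restricting to that class removes the issue.
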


	The following result gives more information about the random measure $Z^{-\rho}(\cdot)$, in particular it
	gives the proportion of the contribution of the random measure $Z^{-\rho}(\cdot)$ on the interval $[0,x]$ over the total mass.
	\begin{prop}\label{prop:proportion}
		Suppose  $\rho<\sqrt{2}$.
        \begin{itemize}
            \item[(i)] $Z^{-\rho}(\d x)$  is an  atomic random measure  on $\R_+$.
 
	        \item[(ii)]	For any $x>0$, we have
		    \begin{equation}\label{eq:Zx_Zinfinity}
			    \E\left[ \frac{Z^{-\rho}(x)} {Z_{\infty}}\right] = 1-e^{-2(\sqrt{2}-\rho)x}.
		    \end{equation}
		
		    \item[(iii)]	 The total mass of the random measure $Z^{-\rho}(\d x)$ is $Z_{\infty}$, the limit of the derivative martingale. More precisely,
		        \begin{equation}\label{eq:Z(x)_limit}
			        \lim_{x\rightarrow\infty} Z^{-\rho}(x) = Z_{\infty}.
		        \end{equation}
        \end{itemize}
	\end{prop}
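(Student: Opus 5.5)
The plan is to first rewrite $Z^{-\rho}(x)$ as a sum over the particles alive at a fixed time $s$, using the branching property; then obtain (ii) from a spine argument, (iii) from (ii) by monotonicity, and (i) from a stabilisation argument. For $u\in N_s$ write $Z^{(u)}_\infty$ for the derivative-martingale limit of the subtree rooted at $u$, shifted to start at $0$. Since the critical additive martingale tends to $0$ a.s., the branching property at time $s$ gives
\begin{equation}
\lim_{t\to\infty}Z^{-\rho}(x,s,t)=\sum_{u\in N_s}\1_{\{\inf_{r\le s}(X_u(r)-\rho r)>-x\}}\,e^{\sqrt2(X_u(s)-\sqrt2 s)}Z^{(u)}_\infty=:Z^{-\rho}(x,s).
\end{equation}
When the indicator is dropped, this sum is exactly $Z_\infty$. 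The map $s\mapsto Z^{-\rho}(x,s)$ is non-increasing, which recovers Proposition \ref{prop_truncated_derivative}.

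\textbf{Part (ii).} I would use the truncated derivative martingale with the Bessel-3 spine. For $\beta>0$ let $Z^{(\beta)}_t$ be the derivative martingale restricted to lineages that stay below $\sqrt2 r+\beta$. It is a positive martingale with limit $Z^{(\beta)}_\infty$, and $Z^{(\beta)}_\infty=Z_\infty$ on the event $\{\sup_r(X_u(r)-\sqrt2 r)<\beta$ for all $u\}$, whose probability tends to $1$ as $\beta\to\infty$. Under the size-biased measure $\mathbb Q_\beta$, the process $R_r=\beta+\sqrt2 r-X_\xi(r)$ along the spine $\xi$ is a Bessel-3 process started at $\beta$. Moreover, conditionally on $\F_\infty$, the spine is chosen with probability proportional to its contribution to $Z^{(\beta)}_\infty$. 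This yields the identity
\begin{equation}
\E\Big[\1_{\{Z^{(\beta)}_\infty>0\}}\frac{Z^{-\rho}_{(\beta)}(x)}{Z^{(\beta)}_\infty}\Big]\cdot\E[Z^{(\beta)}_\infty]\Big/\E[Z^{(\beta)}_\infty]
=\mathbb Q_\beta\Big(\inf_{r\ge0}\big(X_\xi(r)-\rho r\big)>-x\Big).
\end{equation}
To be careful, I would check this identity by first proving it at finite $s$ using the decomposition above, and then letting $s\to\infty$. The right-hand side equals $\mathbf P\big(\inf_r((\sqrt2-\rho)r+\beta-R_r)>-x\big)$. As $\beta\to\infty$, $R_r-\beta$ converges to a standard Brownian motion. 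The Bessel drift $1/R_r$ is at most of order $1/\beta$ over the relevant time horizon, and the process escapes to $+\infty$ linearly because $\sqrt2-\rho>0$. Together these should give convergence to $\mathbf P(\inf_r(aB'_r+ar)>-x)$ with $a=\sqrt2-\rho$, namely $1-e^{-2(\sqrt2-\rho)x}$. The left-hand side converges to $\E[Z^{-\rho}(x)/Z_\infty]$ by bounded convergence, since the ratio lies in $[0,1]$.

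\textbf{Part (iii).} This is then immediate. The family $Z^{-\rho}(x)$ is non-decreasing in $x$ and bounded by $Z_\infty$, so the limit $L=\lim_{x\to\infty}Z^{-\rho}(x)\le Z_\infty$ exists. By (ii) and monotone convergence, $\E[L/Z_\infty]=1$, hence $L=Z_\infty$ a.s. on $\{Z_\infty>0\}$; on $\{Z_\infty=0\}$ both sides vanish.

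\textbf{Part (i).} For fixed $s$, the measure $Z^{-\rho}(\d x,s)$ is a finite sum of atoms located at the running minima $-\inf_{r\le s}(X_u(r)-\rho r)$, $u\in N_s$. The defect $Z^{-\rho}(x,s)-Z^{-\rho}(x)$ comes only from lineages whose running minimum keeps decreasing after time $s$. Since contributing lineages travel at speed $\sqrt2>\rho$, the weighted mass of particles $u\in N_s$ whose descendants later set a new minimum should tend to $0$ as $s\to\infty$, uniformly in $x$. This is the same spine computation as in (ii), now with the event that the spine's minimum after $s$ falls below its minimum before $s$, and it has $\mathbb Q$-probability $\to0$. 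Consequently the atoms of $Z^{-\rho}(\cdot,s)$ carrying mass at least $\varepsilon$ stabilise, and the limit measure is purely atomic with total mass $Z_\infty$ by (iii).

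The main obstacle is the step in (ii) that identifies $\E[Z^{-\rho}(x)/Z_\infty]$ with a spine probability. Ratio expectations do not transfer directly under the size-biasing, so I would first prove the finite-$s$ identity for $\E\big[Z^{(\beta)}(x,s)/Z^{(\beta)}_\infty\big]$ via the tower property over $\F_s$, and then pass to $s\to\infty$ and $\beta\to\infty$. The uniform control of the Bessel drift as $\beta\to\infty$ is secondary but needs care.
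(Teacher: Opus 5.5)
Your plan is the paper's plan (truncated martingale, Bessel-3 spine, $\beta\to\infty$, then (iii) from (ii)), and it breaks at the same step.

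\textbf{The identity in (ii).} Under $\mathbb Q_\beta$ the spine is chosen in proportion to its share of $Z^{(\beta)}_\infty$, and $\mathbb Q_\beta$ has density $Z^{(\beta)}_\infty/\beta$ with respect to $\P$. So what the spine actually gives is
\begin{equation*}
\mathbb Q_\beta\Big(\inf_{r\ge0}\big(X_\xi(r)-\rho r\big)>-x\Big)=\E_{\mathbb Q_\beta}\Big[\frac{Z^{-\rho}_{(\beta)}(x)}{Z^{(\beta)}_\infty}\Big]=\frac{\E\big[Z^{-\rho}_{(\beta)}(x)\big]}{\E\big[Z^{(\beta)}_\infty\big]},
\end{equation*}
a ratio of $\P$-expectations. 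Your display (its factor $\E[Z^{(\beta)}_\infty]/\E[Z^{(\beta)}_\infty]$ is just $1$) instead equates this with the $\P$-expectation of the ratio. That is the quantity you then send to $\E[Z^{-\rho}(x)/Z_\infty]$ by bounded convergence.

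The weight $Z^{(\beta)}_\infty/\beta$ has mean $1$ but tends to $0$ in probability as $\beta\to\infty$, so it cannot be discarded. Conditioning on $\F_s$ does not help either: $\sum_u\1_{A_u}w_uZ^{(u)}_\infty/\sum_u w_uZ^{(u)}_\infty$ is not linear in the i.i.d.\ variables $Z^{(u)}_\infty$. The paper's proof makes the same move when it replaces $\E$ by $\E^{(z,\sqrt2)}$ in its chain of equalities.

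\textbf{(ii) is false as stated, so no repair exists.} On $\{M_r\le\rho r-x\}$ every particle alive at time $r$ lies at or below the barrier. Hence $Z^{-\rho}(x,s,t)=0$ for $t\ge s\ge r$, so $Z^{-\rho}(x)=0$, and therefore $\E[Z^{-\rho}(x)/Z_\infty]\le\P(M_r>\rho r-x)$ for every $r$. Take $\rho=\sqrt2-\epsilon$ and $r=1/\epsilon$. Then $\rho r-x=m_r+\frac{3}{2\sqrt2}\log\frac1\epsilon-1-x$, and Lemma~\ref{lemma:M_tail} bounds the probability by $C_x\,\epsilon^{3/2}\log\frac1\epsilon=o(\epsilon)$. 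But the claimed value is $1-e^{-2\epsilon x}\sim2\epsilon x$. What your Bessel computation actually proves is the size-biased statement $\lim_{\beta\to\infty}\E[Z^{-\rho}_{(\beta)}(x)]/\beta=1-e^{-2(\sqrt2-\rho)x}$.

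\textbf{Consequences for (iii) and (i).} Your (iii) is deduced from (ii), so it is not established either. It is nevertheless true, and it follows from the same first-moment identity with $\beta$ fixed: $\E[Z^{(\beta)}_\infty-Z^{-\rho}_{(\beta)}(x)]/\beta=\mathbb Q_\beta(\exists r:\ X_\xi(r)-\rho r\le-x)$. This tends to $0$ as $x\to\infty$ because $X_\xi(r)-\rho r=\beta+(\sqrt2-\rho)r-R_r\to\infty$. The defect is non-negative and non-increasing in $x$, so it tends to $0$ almost surely. On $\gamma^{(\beta,\sqrt2)}$ it equals $Z_\infty-Z^{-\rho}(x)$; then let $\beta\to\infty$.

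Your stabilisation argument for (i) can be made rigorous in the same first-moment form. The $Z^{(\beta)}$-mass of lineages whose running minimum still decreases after time $s$ is non-increasing in $s$. Its mean is $\beta$ times the $\mathbb Q_\beta$-probability that the spine sets a new minimum after $s$, which tends to $0$. It must not be routed through an expectation of a ratio.
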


    As a direct corollary of Theorem \ref{thrm1}, we obtain the following result, which
    answers the question mentioned at the end of Section
    \ref{Sec1.1}.
	\begin{corollary}\label{cor_extremal}
        Suppose $\rho<\sqrt{2}$.

        $\mathrm{(i)}$ Under $\P$,
		\begin{equation}
			\lim_{t\rightarrow\infty} \sum_{u\in N_t} \delta_{X_u(t)-m_t}\mathbf{1}_{\left\{\inf_{s\leq t} \{X_u(s)-\rho s\} >-x\right\}} = \mathrm{DPPP}(\sqrt{2}C_*Z^{-\rho}(x)e^{-\sqrt{2}y}\d y, \D^{\sqrt{2}}) \mbox{ in law.}
		\end{equation}

        $\mathrm{(ii)}$ Under $\P$,
		\begin{equation}\label{eq:extremal_BBMAB-BBM}
			\lim_{x\rightarrow\infty}\lim_{t\rightarrow\infty} \sum_{u\in N_t} \delta_{X_u(t)-m_t}\mathbf{1}_{\left\{\inf_{s\leq t} \{X_u(s)-\rho s\} >-x\right\}} = \mathrm{DPPP}(\sqrt{2}C_*Z_{\infty}e^{-\sqrt{2}y}\d y, \D^{\sqrt{2}}) \mbox{ in law,}
		\end{equation}
	    that is, the double limit, when $t\to\infty$ first and then $x\to\infty$, of the extremal process of killed BBM with drift is equal to the limit of the extremal process of classical BBM  as $t\to\infty$.
    \end{corollary}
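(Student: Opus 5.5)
Both parts follow from Theorem \ref{thrm1} by the continuous mapping theorem, together with Proposition \ref{prop:proportion}(iii) for the limit in $x$.

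\textbf{Part (i).} Fix $x>0$ and write $\widehat{\mathcal E}_t$ for the two-dimensional point process of Theorem \ref{thrm1}. The killed extremal process is exactly the image of $\widehat{\mathcal E}_t$ under the map
\[
\Phi_x:\ \sum_k\delta_{(a_k,b_k)}\mapsto\sum_k\delta_{a_k}\1_{\{b_k<x\}},
\]
since $-\inf_{s\le t}\{X_u(s)-\rho s\}<x$ is the same as $\inf_{s\le t}\{X_u(s)-\rho s\}>-x$.

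To pass to the limit, I would use Laplace functionals. Take $\varphi\ge 0$ continuous with compact support on $\R$, bounded below in the first coordinate in the vague topology used for extremal processes. Then
\[
\E\left[e^{-\langle \Phi_x(\widehat{\mathcal E}_t),\varphi\rangle}\right]=\E\left[e^{-\langle \widehat{\mathcal E}_t,\varphi\otimes \1_{[0,x)}\rangle}\right].
\]
The test function $\varphi\otimes\1_{[0,x)}$ is discontinuous at $b=x$, so I would approximate the indicator from above and below by continuous functions $g_\epsilon^\pm$. For the limit DPPP, the boundary contribution vanishes as $\epsilon\to0$ provided $Z^{-\rho}$ has no atom at $x$ almost surely.

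This is the main obstacle, because Proposition \ref{prop:proportion}(i) says $Z^{-\rho}(\d x)$ is atomic. I would handle it in one of two ways:
\begin{itemize}
\item[(a)] Show that for each fixed $x$ the probability of an atom exactly at $x$ is zero. This should hold because the atom locations come from minima of continuous paths with continuous laws.
\item[(b)] Otherwise, use the monotonicity of $Z^{-\rho}(y)$ in $y$, together with the fact that the prelimit indicator is monotone in $x$. Sandwiching between $x-\epsilon$ and $x+\epsilon$ then gives the result at every continuity point; by right continuity of $Z^{-\rho}$, defined through $\inf>-x$, this also recovers the limit at $x$ itself.
\end{itemize}

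Given this, the limit Laplace functional is computed from the DPPP. The decoration has second coordinate $0$, so every decoration point carries the same second coordinate as its Poisson atom $(p_i,q_i)$. Restricting the Poisson process to $\{q<x\}$ gives a Poisson process with intensity $\sqrt2C_*e^{-\sqrt2 y}\d y\, Z^{-\rho}([0,x))$. Projecting onto the first coordinate therefore gives $\mathrm{DPPP}(\sqrt2C_*Z^{-\rho}(x)e^{-\sqrt2y}\d y,\D^{\sqrt2})$. Explicitly,
\[
\E\exp\Big\{-\sqrt2C_*Z^{-\rho}(x)\int\big(1-\E e^{-\sum_j\varphi(y+d_j)}\big)e^{-\sqrt2y}\d y\Big\},
\]
which is the claimed limit.

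\textbf{Part (ii).} The Laplace functional above is a bounded, continuous, monotone function of $Z^{-\rho}(x)$. By Proposition \ref{prop:proportion}(iii), $Z^{-\rho}(x)\to Z_\infty$ almost surely as $x\to\infty$. Dominated convergence then gives convergence to the Laplace functional of $\mathrm{DPPP}(\sqrt2C_*Z_\infty e^{-\sqrt2y}\d y,\D^{\sqrt2})$, which proves \eqref{eq:extremal_BBMAB-BBM}.
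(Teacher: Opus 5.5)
Your route is the paper's route. You feed $\varphi(y)\1_{(0,x)}(z)$ into the Laplace functional of Theorem~\ref{thrm1}, use that the decoration sits on $\{z=0\}$ to read off $\mathrm{DPPP}(\sqrt2C_*Z^{-\rho}(x)e^{-\sqrt2y}\d y,\D^{\sqrt2})$, and then let $x\to\infty$ by bounded convergence and Proposition~\ref{prop:proportion}(iii). You are right that the discontinuity of the indicator is the real issue. The paper's one-line remark (approximate $\1_{(0,x)}$ from below by continuous functions and use monotone convergence) only yields the upper bound $\limsup_t F_t(x)\le G(x-)$ in the notation below.

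The gap is in how you resolve it, because option (b) does not work. Let $F_t(x)$ be the prelimit Laplace functional, $G(x)=\E\exp\{-c\,Z^{-\rho}(x)\}$, and $c=\sqrt2C_*\int(1-\E e^{-\sum_j\varphi(y+d_j)})e^{-\sqrt2y}\d y$. Monotonicity in $x$ and continuous sandwiching give only
\[
G(x+)\le\liminf_{t\to\infty}F_t(x)\le\limsup_{t\to\infty}F_t(x)\le G(x-).
\]
No one-sided pathwise continuity of $x\mapsto Z^{-\rho}(x)$ decides between the two ends. Incidentally, the strict inequality $\inf>-x$ makes the approximants $Z^{-\rho}(\cdot,s)$ left-continuous, not right-continuous. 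So you genuinely need (a): a fixed $x$ is a.s. not an atom. Your heuristic for (a) is not a proof, since $Z^{-\rho}$ is a double limit and its atoms are not a priori located at the finitely many values $-\gamma_v(s)$.

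The clean fix is Proposition~\ref{prop:proportion}(ii). Set $R(x):=Z^{-\rho}(x)/Z_\infty\in[0,1]$, which is nondecreasing in $x$. Bounded convergence and continuity of $x\mapsto 1-e^{-2(\sqrt2-\rho)x}$ give $\E R(x+)=\E R(x-)=1-e^{-2(\sqrt2-\rho)x}$. Hence $\E[R(x+)-R(x-)]=0$, and since $Z_\infty\in(0,\infty)$ a.s., $Z^{-\rho}(\{x\})=0$ a.s. This gives $G(x+)=G(x-)=G(x)$, which closes your sandwich. It also justifies replacing $Z^{-\rho}([0,x))$ by $Z^{-\rho}(x)$ in your intensity computation. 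With this, part (ii) goes through exactly as in the paper.
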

	
	\begin{remark}
		Let
		\begin{equation}\label{def_X^xrho_t}
			\mathbb{X}^{x, -\rho}_t:=\sum_{u\in N_t}\delta_{x+X_u(t)-\rho t}
		\end{equation}
		denote a branching Brownian motion with drift $-\rho$ starting from $x$. Then
		\begin{equation}
			\sum_{u\in \widetilde{N}_t^{x, -\rho}} \delta_{x+X_u(t)-m_t}
		\end{equation}
		is the extremal process of this BBM with absorbing barrier at the origin.
		Yang and Zhu \cite{YZ24} showed that, under $\P$,
		\begin{equation}
			\lim_{t\rightarrow\infty}
			\sum_{u\in \widetilde{N}_t^{x, -\rho}}\delta_{x+X_u(t)-m_t}
			= \mathrm{DPPP}\left(\sqrt{2}C_* 		Z_{\infty}^{x, -\rho}
			e^{-\sqrt{2}y}\d y, \D^{\sqrt{2}}\right) \mbox{ in law},
		\end{equation}
		where
		\begin{equation}
			 Z_{\infty}^{x, -\rho}
			 = \lim_{t\rightarrow\infty}  \sum_{u\in \widetilde{N}_t^{x, -\rho}} (\sqrt{2}t - X_u(t)-x) e^{\sqrt{2}(x+X_u(t)-\sqrt{2}t) }.
		\end{equation}
		Actually, it can be shown that
		\begin{equation}
			(Z_{\infty}^{x, -\rho}, \P) \overset{d.}{=} (e^{\sqrt{2}x}Z^{-\rho}(x), \P).
		\end{equation}
		Therefore,
        Corollary \ref{cor_extremal}
        $\mathrm{(i)}$
        is essentially the main result of \cite{YZ24}, with the only difference being that the starting point and the absorbing barrier in \cite{YZ24} are both  shifted upward by $x$.
	\end{remark}

	\section{Some properties of branching Brownian motion}\label{Sec2}
	Throughout this paper we use $\{B_t, t\geq 0; \mathbf{P} \}$ to denote a standard Brownian motion starting from the origin. Expectation with respect to $\mathbf{P}$ will be denoted by $\mathbf{E}$. Then, $\{x+B_t, t\geq 0; \bP \}$ is a standard Brownian motion starting from $x$.

	Let $\{\mathcal{F}_t^B: t\geq 0\}$ be the natural filtration of $\{B_t, t\geq 0\}$.
	For BBMs, the many-to-one lemma (see, for example, \cite[Theorem 8.5]{HH09} and \cite{Harris17}) is fundamental. We will need two versions of the many-to-one lemma, one version being the stopping line version which can be found in \cite[\S 2.3]{Maillard12}.

	Let $D$ be a time-space domain. We observe each particle at the first, in the family history, exit time from $D$.
	More precisely, for any particle $u$,
	let $d_u$ be the death time of the particle $u$ and
	we use $\tau_D(u)$ to denote the first exit time of the path of $u$ from $D$ defined as
    $$
     \tau_D(u):=\inf\{ s\in [0,d_u), (s, X_u(s))\notin D\}
    $$
    with the convention that $\inf \emptyset=\infty$.
    Put
    $$
    \tau_D:=\inf\{ s\geq 0, (s, B_s)\notin D\}
    $$

    Define the stopping line
	\begin{equation}\label{def_stopping_line}
		L_D := \left\{ (u,t) \in (\cup_{t\geq 0} N_t) \times [0,\infty): u\in N_t, \tau_D(u) = t \right\},
	\end{equation}
    and let $N_D$ be the set of particles stopped on the line, i.e.,
    \begin{equation}\label{def_stopping_set}
        N_D := \{u\in \cup_{t\geq 0} N_t: (u,t)\in L_D \mbox{ for some } t \}.
    \end{equation}
    We refer to \cite{Chauvin91} for the precise definition.

    \begin{lemma}[Many-to-one formula]\label{lemma:many-to-one}
    $\mathrm{(i)}$
    For any nonnegative bounded measurable function $F:[0,\infty)\times \R\rightarrow\R$, it holds that
		\begin{equation}\label{many-to-one}
               \E \left[\sum_{u\in N_{D}} F\left(\tau_D(u), X_u(\tau_D(u))\right)\mathbf{1}_{ (\tau_D(u)<\infty) }\right]  = \bP \left[ e^{\tau_D} F\left(\tau_D, B_{\tau_D}\right), \tau_D<\infty \right].
		\end{equation}
	$\mathrm{(ii)}$	
		For any $t>0$ and any nonnegative bounded measurable function $F:C[0,t]\rightarrow\R$, it holds that
		\begin{equation}\label{many-to-one'}
		    \E \left[\sum_{u\in N_t} F\left(X_u(s),s\leq t\right) \right]  = \bP \left[ e^t F\left(B_s,s\leq t\right) \right].
		\end{equation}
    \end{lemma}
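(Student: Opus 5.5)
The plan is to prove (ii) first, since it is the classical fixed-time statement, and then deduce (i) from (ii) by a "correction of overcounting" argument at the stopping line.

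For (ii), I would start with $F(w)=f(w(t))$ depending only on the endpoint, $f\ge 0$ bounded. Set $v(t,x):=\E_x\bigl[\sum_{u\in N_t} f(X_u(t))\bigr]$. Decompose on the first branching time $\sigma\sim\mathrm{Exp}(1)$, using the branching property and ${\rm E}L=2$. This gives the renewal equation
$$
v(t,x)=e^{-t}P_tf(x)+\int_0^t e^{-s}\,P_s\bigl(2v(t-s,\cdot)\bigr)(x)\,\d s ,
$$
where $P_t$ is the Brownian semigroup. The function $e^{t}P_tf$ solves this equation. Uniqueness holds among solutions bounded by $Ce^{ct}$, by Gronwall, since a priori $v(t,x)\le \|f\|_\infty\,\E|N_t|$ and $\E|N_t|=e^{t}$ by the same argument with $f\equiv1$. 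Hence $v(t,x)=e^{t}\bE[f(x+B_t)]$.

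Next I extend to cylinder functionals $F=\prod_{i=1}^n f_i(w(s_i))$ with $0\le s_1<\dots<s_n=t$. I apply the Markov and branching property at times $s_{n-1},\dots,s_1$ and induct on $n$. Each particle at time $s_{i}$ contributes, in expectation, $e^{s_{i+1}-s_i}$ times the Brownian transition. The exponentials then multiply to $e^{t}$, and the Brownian transitions chain into the finite-dimensional law of $B$. Finally, a monotone class argument (both sides are finite measures in $F$, with total mass $e^t$) extends the identity to all bounded nonnegative measurable $F$ on $C[0,t]$.

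For (i), fix $T>0$ and apply (ii) to the path functional
$$
G(w):=e^{-(T-\tau_D(w))}F\bigl(\tau_D(w),w(\tau_D(w))\bigr)\1_{\{\tau_D(w)\le T\}} .
$$
This is bounded, nonnegative and $C[0,T]$-measurable. By (ii), $\E\bigl[\sum_{v\in N_T}G(X_v)\bigr]=\bP\bigl[e^{\tau_D}F(\tau_D,B_{\tau_D});\tau_D\le T\bigr]$. On the other hand, every $v\in N_T$ with $\tau_D(v)\le T$ has a unique ancestor $u\in N_D$ with $\tau_D(u)=\tau_D(v)$, and $G(X_v)$ depends only on that ancestor. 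Applying the strong Markov (branching) property at the stopping line $L_D$ (Chauvin \cite{Chauvin91}), the conditional expected number of descendants at time $T$ of $u$ is $\E|N_{T-\tau_D(u)}|=e^{T-\tau_D(u)}$. This exactly cancels the weight $e^{-(T-\tau_D)}$, so
$$
\E\Bigl[\sum_{v\in N_T}G(X_v)\Bigr]=\E\Bigl[\sum_{u\in N_D}F(\tau_D(u),X_u(\tau_D(u)))\1_{\{\tau_D(u)\le T\}}\Bigr].
$$
Letting $T\to\infty$ by monotone convergence on both sides gives \eqref{many-to-one}.

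The main technical point is the use of the strong branching property at the (random, optional) stopping line $L_D$. This is where Chauvin's framework, or equivalently Maillard's \cite[\S 2.3]{Maillard12}, is needed. If one wants to avoid it, one can discretize $\tau_D$ onto a grid $t_k=kh$, use the ordinary Markov property at deterministic times on the events $\{\tau_D(u)\in(t_{k-1},t_k]\}$, and pass to the limit $h\to0$.
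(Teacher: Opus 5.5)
Your proof is correct, but for part (i) it takes a different route from the paper. For part (ii) the paper gives no proof and cites Hardy--Harris, so your renewal-equation, cylinder-function and monotone-class derivation is an extra, standard argument. One small point there: the a priori bound $\E|N_t|\le e^t$ that your Gronwall uniqueness step needs should be obtained by truncating in the generation number. Saying ``by the same argument with $f\equiv 1$'' presupposes that finiteness.

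\textbf{The paper's route for (i).} The paper works analytically. It treats BBM as a branching system driven by the space--time motion $(t,B_t)$ and invokes Dynkin's integral equation for $v_\lambda(r,x)=-\log \P_{r,x}e^{-\lambda Y}$. It then differentiates in $\lambda$ at $\lambda=0$ to get the linear equation $u(r,x)=\bP_{r,x}\bigl[F(\tau_D,B_{\tau_D})\1_{(\tau_D<\infty)}+\int_r^{\tau_D}u(s,B_s)\,\d s\bigr]$. Finally it identifies $u$ through the Feynman--Kac uniqueness lemma, after first restricting to $F$ supported in $[0,a)$.

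\textbf{Your route for (i).} You reduce (i) to the fixed-time formula (ii) by a reweighting at horizon $T$. The factor $e^{-(T-\tau_D)}$ in $G$ exactly cancels the expected number $e^{T-\tau_D(u)}$ of time-$T$ descendants of each stopped particle $u\in N_D$. Letting $T\uparrow\infty$ then handles $\{\tau_D<\infty\}$ directly.

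\textbf{Comparison.} Your argument is more elementary and self-contained. It avoids differentiating the Laplace functional in $\lambda$, which needs an interchange of derivative and expectation that the paper does not address. It also avoids the Schr\"odinger-equation uniqueness lemma. Its only nontrivial input is the strong branching property at the optional line $L_D$, and the paper uses exactly that in Appendix B when it conditions on $\F_{\tau_x^{\rho}}$. The paper's approach instead places the identity inside Dynkin's branching exit framework. That framework carries over verbatim to other motions and produces the first-moment integral equation as a by-product.
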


	The proof of Lemma \ref{lemma:many-to-one} (i) will be given in Appendix \ref{sec:appendix_manytoone}. Lemma \ref{lemma:many-to-one} (ii) is from Theorem 8.5 of \cite{HH09}.
 \bigskip

	Now we introduce some results on the additive and derivative martingales of BBM. For $t\geq 0$, define
	\begin{equation}
		W_t := \sum_{u\in N_t} e^{\sqrt{2}(X_u(t) - \sqrt{2}t)}.
	\end{equation}
	Then $\{W_t, t\geq 0;  \P\}$ is called the additive martingale of BBM, which corresponds to $\{W_t(\lambda), t\geq 0\}$ with $\lambda = \underline{\lambda}=\sqrt{2}$ in \cite{Kyprianou04}.
	Similarly, the derivative martingale $\{Z_t, t\geq 0; \P\}$ defined by \eqref{def_derivative} corresponds to $\{\partial W_t(\underline{\lambda})\}$ in \cite{Kyprianou04}.
	By \cite[Theorems 1 and 3]{Kyprianou04}, we have the following result.
	\begin{lemma}\label{lemma:add_deri}
		 For any $y\in\R$, the limits $W_{\infty} := \lim_{t\rightarrow\infty} W_t$ and $Z_{\infty} := \lim_{t\rightarrow\infty} Z_t$ exist $\P$-almost surely. Furthermore, $W_{\infty} = 0$ and $Z_{\infty}\in (0,\infty)$ $\P$-almost surely.
	\end{lemma}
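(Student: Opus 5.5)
The plan is to follow the spine/measure-change approach of Kyprianou \cite{Kyprianou04} (Lyons' method), treating the additive martingale first and then the derivative martingale through a truncated, nonnegative version. Throughout I use Lemma \ref{lemma:many-to-one}. I also use the standard spine decomposition of BBM under a size-biased measure. In that decomposition the spine moves as a designated Brownian motion under the new law. It branches at the accelerated rate ${\rm E}L=2$ with size-biased offspring numbers, and immigrates independent copies of $\P$-BBM along its path.

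\emph{Step 1: $W_\infty$ exists and equals $0$.} The process $W_t$ is a nonnegative $\P$-martingale, so $W_\infty$ exists a.s. Define $\mathbb{Q}$ on each $\F_t$ by $\d\mathbb{Q}/\d\P|_{\F_t}=W_t$. Under $\mathbb{Q}$ the spine $\xi_t$ is a Brownian motion with drift $\sqrt2$, so $\xi_t-\sqrt2 t$ is a standard Brownian motion. Since $W_t\ge e^{\sqrt2(\xi_t-\sqrt2 t)}$, we get $\limsup_t W_t=\infty$ $\mathbb{Q}$-a.s. The standard dichotomy ($\mathbb{Q}\ll\P$ iff $W_\infty<\infty$ $\mathbb{Q}$-a.s.; $\mathbb{Q}\perp\P$ iff $W_\infty=\infty$ $\mathbb{Q}$-a.s.) then gives $\mathbb{Q}\perp\P$, and hence $W_\infty=0$ $\P$-a.s.

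\emph{Step 2: a truncated derivative martingale.} For $\beta>0$ put
\begin{equation}
Z_t^{(\beta)}:=\sum_{u\in N_t}\bigl(\beta+\sqrt2 t-X_u(t)\bigr)e^{\sqrt2(X_u(t)-\sqrt2 t)}\mathbf{1}_{\{X_u(s)<\sqrt2 s+\beta,\ \forall s\le t\}} .
\end{equation}
By the many-to-one formula and the harmonicity of $y\mapsto y$ for Brownian motion killed at $0$, $Z^{(\beta)}_t$ is a nonnegative martingale. Define $\mathbb{Q}^{(\beta)}$ via $\d\mathbb{Q}^{(\beta)}/\d\P|_{\F_t}=Z^{(\beta)}_t/\beta$. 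Under $\mathbb{Q}^{(\beta)}$ the process $\beta+\sqrt2 t-\xi_t$ is a Bessel-3 process started at $\beta$.

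Using the spine decomposition, the conditional expectation of $Z^{(\beta)}_t$ given the spine is bounded by a sum over the spine's branching times $T_i$. The term for $T_i$ is of order $(R_{T_i}+1)\,e^{-\sqrt2 R_{T_i}}$ times the offspring number at $T_i$, where $R$ is the Bessel-3 process. Since $R_t\ge t^{1/2-\varepsilon}$ eventually, and ${\rm E}L^2<\infty$ makes the size-biased offspring numbers have finite mean, a Borel--Cantelli argument shows this sum is finite $\mathbb{Q}^{(\beta)}$-a.s. Then $\liminf_t Z^{(\beta)}_t<\infty$ $\mathbb{Q}^{(\beta)}$-a.s., which gives $\mathbb{Q}^{(\beta)}\ll\P$ on $\F_\infty$. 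Hence $Z^{(\beta)}_t$ is uniformly integrable and its limit $Z^{(\beta)}_\infty$ has $\E Z^{(\beta)}_\infty=\beta>0$.

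\emph{Step 3: from $Z^{(\beta)}$ to $Z_t$.} I expect the main obstacle here to be showing that $\inf_{u,s}\{\sqrt2 s-X_u(s)\}>-\infty$ a.s. For this I will use the stopping line $D=\{(s,y):y<\sqrt2 s+\beta\}$ in Lemma \ref{lemma:many-to-one}(i). The expected number of particles ever hitting $\sqrt2 s+\beta$ is
\begin{equation}
\bP\left[e^{\tau}\mathbf{1}_{\{\tau<\infty\}}\right]=e^{-\sqrt2\beta},
\end{equation}
where $\tau$ is the hitting time of $\sqrt2 s+\beta$; this equals $e^{-\sqrt2\beta}$ by a Girsanov change to driftless Brownian motion. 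So the probability that some particle ever crosses the line is at most $e^{-\sqrt2\beta}\to0$. On the complementary event $A_\beta$ the indicator in $Z^{(\beta)}_t$ equals $1$ for all $u$, and therefore $Z^{(\beta)}_t=Z_t+\beta W_t$. By Step 1 this gives $Z_t\to Z^{(\beta)}_\infty$ on $A_\beta$. Letting $\beta\to\infty$, the limit $Z_\infty$ exists and is finite a.s.

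For nontriviality, $\E[Z^{(\beta)}_\infty]=\beta$ shows that $\P(Z_\infty>0)>0$. Decomposing at the first branching time gives that $\P(Z_\infty=0)$ is a fixed point of $f$ smaller than $1$. Under the standing assumption $p_0=0$ this fixed point is $0$ (in general it is the extinction probability, i.e. positivity holds on survival). This gives $Z_\infty\in(0,\infty)$ a.s.
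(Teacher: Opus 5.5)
Your proposal is correct and is essentially the argument behind the paper's citation of Kyprianou's Theorems 1 and 3: Lyons' change of measure with a drifted spine for $W_t$, then the nonnegative truncated martingale (the paper's $V^z_t$) with a Bessel-3 spine, and finally the identification $Z_\infty=V^z_\infty$ on $\gamma^{(z,\sqrt{2})}$ with $\P(\gamma^{(z,\sqrt{2})})\uparrow 1$. Your caveat about $p_0$ is right, since the paper allows $p_0>0$, and then $Z_\infty=0$ on extinction, so strict positivity only holds on survival (Kyprianou's setting has no deaths); also note that deducing $\P(Z_\infty>0)>0$ from $\E Z^{(\beta)}_\infty=\beta$ needs $Z^{(\beta)}_\infty\le Z_\infty$, which follows because $Z^{(\beta)}_t$ is monotone in $\beta$ and $Z^{(\beta')}_\infty=Z_\infty$ on $A_{\beta'}$ with $\P(A_{\beta'})\to 1$.
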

	Note that $Z_t$ can be negative. For any $z>0$, define
	\begin{equation}\label{def_mart_V}
		V_t^z := \sum_{u\in N_t}  (z+\sqrt{2}t-X_u(t)) e^{\sqrt{2}(X_u(t)-\sqrt{2}t) } \1_{\left\{ \inf_{r\leq t} (z+\sqrt{2}r-X_u(r)) > 0 \right\} },\quad t\geq 0.
	\end{equation}
    By \cite[Theorem 9]{Kyprianou04}, $\{V_t^z, t\geq 0\}$ is a (non-negative) martingale. According to \cite[Theorem 13]{Kyprianou04}, for any $z>0$, the limit $V_{\infty}^z := \lim\limits_{t\rightarrow\infty} V_t^z$ exists $\P$-almost surely and
    the convergence also holds in $L^1(\P)$.
	Define
	\begin{equation}\label{def_gamma}
		\gamma^{(z,\sqrt{2})} := \{\omega\in\Omega: \forall t\geq 0, \forall u\in N_t, X_u(t) \leq z + \sqrt{2}t \}.
	\end{equation}
	By the proof of \cite[Corollary 10]{Kyprianou04}, we know that
	\begin{equation}\label{V=Z}
	\lim\limits_{t\rightarrow\infty} V_t^z=	V_{\infty}^z = Z_{\infty}\quad \mbox{ on } \gamma^{(z,\sqrt{2})},
	\end{equation}
	and
	\begin{equation}\label{eq_gamma_to_1}
		\P(\gamma^{(z,\sqrt{2})}) \uparrow 1 \mbox{ as } z\uparrow\infty.
	\end{equation}
    Therefore, $\{Z_t, t\geq 0\}$ can be approximated by $\{V_t^z, t\geq 0\}$ almost surely as $z\to\infty$.

	Since $\{V_t^z, \P\}$ is a non-negative martingale, we can define a probability measure $\P^{(z,\sqrt{2})}$ by
	\begin{align}
		\frac{\d \P^{(z,\sqrt{2})}}{\d \P} \bigg{|}_{\F_t} = \frac{V_t^z}{V_0^z},\quad t>0.
	\end{align}
    It was proved in \cite[Section 6]{Kyprianou04} that, under $ \P^{(z,\sqrt{2})}$, the following spine decomposition is valid:

	\begin{enumerate}[(i)]
		\item There is a distinguished and randomized spine. The diffusion along the spine begins from the origin at time $0$ and $\{z+\sqrt{2}t-X_{\xi}(t), t\geq 0\}$ is a Bessel-3 process started from $z$, where $\xi=\{\xi_t:t\ge0\}$ is the spine, and $\xi_t$ is the spine particle at time $t$ and $X_\xi(t)$ is the shorthand for $X_{\xi_t}(t)$.
		\item The branching rate along the spine is $2$.
		\item The distribution of offspring numbers at each fission point on the spine is the size-biased distribution $\{\tilde{p}_k = kp_k/2, k\geq 1\}$.
		\item The spine is chosen uniformly from all the offsping at the fission time on the spine.
		\item The remaining particles evolve as classical branching Brownian motions.
	\end{enumerate}
	Moreover,
	\begin{equation}\label{eq:spine_decomposition}
		\P^{(z,\sqrt{2})}\left( \xi_t=u \mid \F_t \right) = \frac{(z+\sqrt{2}t-X_u(t)) e^{\sqrt{2}(X_u(t)-\sqrt{2}t) } \1_{\left\{ \inf_{r\leq t} (z+\sqrt{2}r-X_u(r)) > 0 \right\} }}{V_t^z},
	\end{equation}
    see, for example, \cite[Lemma 52]{Berestycki14a}.

   \bigskip
	
	To prove Theorem \ref{thrm1}, we also need some results on  $M_t$ and the extremal process of
	the BBM.
	The following estimate of the tail probability of $M_t$ can be found in \cite[Corollary 10]{ABK12}.
	\begin{lemma}\label{lemma:M_tail}
		There exists $t_0>0$ such that for any $y>1$ and $t\ge t_0$,
		\begin{equation}
			\P\left( M_t \geq \sqrt{2}t - \frac{3}{2\sqrt{2}}\log t + y \right) \leq b y e^{-\sqrt{2}y - \frac{y^2}{2t} + \frac{3}{2\sqrt{2}} y \frac{\log t}{t}}
		\end{equation}
		for some constant $b>0$.
	\end{lemma}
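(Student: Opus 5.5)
This bound is \cite[Corollary 10]{ABK12}; here is how I would prove it directly. The approach is a first-moment bound on a restricted count of particles. I set $m_t = \sqrt2 t - \frac{3}{2\sqrt2}\log t$. The crude bound $\P(M_t\ge m_t+y)\le \E[\#\{u\in N_t: X_u(t)\ge m_t+y\}]$ loses a factor $t$. So I first introduce a linear barrier. Let $f_t(s) := \frac{s}{t}(m_t+y)$ be the straight line from $0$ to $m_t+y$. I split the event as follows:
\[
\{M_t\ge m_t+y\}\subset A_1\cup A_2,
\]
where $A_1$ is the event that some $u\in N_t$ has $X_u(t)\ge m_t+y$ and $X_u(s)\le f_t(s)+1$ for all $s\le t$, and $A_2$ is the event that some particle crosses the curve $f_t(\cdot)+1$ before time $t$.

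For $A_1$ I use Markov's inequality and Lemma \ref{lemma:many-to-one}(ii). This gives $e^t\bP(B_t\ge m_t+y,\ B_s\le f_t(s)+1\ \forall s\le t)$. By Girsanov I remove the drift $(m_t+y)/t$, which produces the factor $e^{-(m_t+y)^2/(2t)}$ times a Brownian-bridge ballot probability. The ballot estimate for a bridge staying below level $1$ and ending in $[0,1]$-type windows is of order $1/t$, after integrating the overshoot against $e^{-\frac{m_t+y}{t}w}$. Expanding
\[
t-\frac{(m_t+y)^2}{2t} = \frac32\log t-\sqrt2 y-\frac{y^2}{2t}+\frac{3}{2\sqrt2}\frac{y\log t}{t}+O\left(\frac{\log^2 t}{t}\right)
\]
gives $e^{t}e^{-(m_t+y)^2/2t}\approx t^{3/2}e^{-\sqrt2y-y^2/2t+\frac{3}{2\sqrt2}y\log t/t}$. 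Combined with the $t^{-3/2}$ from the ballot estimate and the Gaussian density, this yields exactly the exponent claimed. The linear factor $y$ is absent at this stage; it comes from refining the barrier.

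To get the factor $y$ correctly, the barrier should really be placed at height $y$ above the line $\frac{s}{t}m_t$, rather than $1$ above the full line. The count then becomes $e^t\bP(B_t\in m_t+y+[0,1], B_s\le \frac{s}{t}m_t+y)$, and the ballot probability is $\asymp y\cdot 1/t^{3/2}$ by the reflection estimate for bridges started at distance $y$ from the barrier. For $A_2$ I bound the probability that some particle ever exceeds $\frac{s}{t}m_t+y$ for $s\le t$. I would do this with Lemma \ref{lemma:many-to-one}(i), applied to the stopping line at the first hitting of that curve: the expected number of first hits at time $s$ is $e^s$ times the hitting density. After summing over $s$ and a Girsanov change, this is again bounded by $Cye^{-\sqrt2 y}\times$ the same correction terms. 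The reason is that the curve's slope is $\sqrt2-O(\log t/t)$, and $e^{s-(\sqrt2 s-\frac{3s}{2\sqrt2 t}\log t+y)^2/2s}$ stays at most of order $e^{-\sqrt2 y}$, with the polynomial factors controlled by the hitting density of a nearly critical line.

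The main obstacle is this second estimate, $A_2$. A naive union bound over the crossing time gives an extra $\log t$ or worse near $s\approx t$. I would handle this by splitting the crossing time into $s\le t/2$ and $s>t/2$. The first range is handled by the crude hitting bound, using $e^{-\sqrt2 y}$ and the $t^{3/2}$-saving slope correction. In the second range I would condition on the crossing particle and use that its descendants must still reach $m_t+y$. That recovers the tail factor, and the Gaussian term $e^{-y^2/2t}$ survives because only $y\ll t$ matters, while for $y\gtrsim t$ the bound is trivially Gaussian. The threshold $t_0$ absorbs the $O(\log^2t/t)$ errors.
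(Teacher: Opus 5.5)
The paper gives no proof of this lemma; it quotes \cite[Corollary 10]{ABK12}, where it follows from Bramson's estimates. Your direct route has a genuine gap in the estimate of $A_2$, and that estimate carries the whole argument. Your refined barrier $\frac st m_t+y$ equals $m_t+y$ at time $t$, so $A_1$ is a null event: the window computation with $B_t\in m_t+y+[0,1]$ and $B_s\le \frac st m_t+y$ is vacuous. Put $\nu:=m_t/t=\sqrt2-\frac{3}{2\sqrt2}\frac{\log t}{t}$. By Lemma \ref{lemma:many-to-one}(i), the expected number of particles first crossing the line $u\mapsto y+\nu u$ at a time in $\d s$ is
\[
\frac{y}{\sqrt{2\pi s^3}}\,e^{-\nu y-\frac{y^2}{2s}}\,e^{s(1-\nu^2/2)}\,\d s,\qquad s\Big(1-\frac{\nu^2}{2}\Big)=\frac{3s}{2t}\log t+o(1).
\]
Because $\nu<\sqrt2$, the slope correction is a cost $t^{3s/(2t)}$, not a saving. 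For fixed $y$ the integral over $s\le t/2$ is of order $y\,t^{1/4}(\log t)^{-1}e^{-\sqrt2 y}$, so the ``crude hitting bound'' on your first range fails by a power of $t$. This is not an artefact of the union bound. At $s=t/2$ the line sits at $\sqrt2 s-\frac{3}{4\sqrt2}\log t+y$, below the almost-sure upper envelope $\sqrt2 s-\frac{1}{2\sqrt2}\log s$ of $M_s$, so crossing it is not a rare event. You therefore have to bound $\P(A_2\cap\{M_t\ge m_t+y\})$, not $\P(A_2)$.

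That is exactly the device you reserve for $s>t/2$, and there it works. If the crossing occurs at $s=t-r$, some descendant must achieve $M_r\ge m_r+z(r)$ with $z(r)=\frac{3}{2\sqrt2}(\log r-\frac rt\log t)$, and $t^{3s/(2t)}e^{-\sqrt2 z(r)}=(t/r)^{3/2}$. Using the $\log$-lossy a priori bound $\P(M_r\ge m_r+z)\le C(1+z)\log(r+2)e^{-\sqrt2 z}$ (which comes from the barrier $\sqrt2 s+z$), the contribution for $r\le t/2$ is of order $ye^{-\sqrt2 y}(1+z(r))\log(r+2)\,r^{-3/2}$, which is integrable in $r$.

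For $s\le t/2$, however, $r\in[t/2,t]$. There you would need the sharp tail $\P(M_r\ge m_r+z)\lesssim(1+z)e^{-\sqrt2 z}$ at times comparable with $t$, i.e.\ the lemma itself, and any lossy substitute brings back a factor $\log t$. So no straight-line barrier closes the argument.

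The missing idea is a curved barrier that stays roughly $\frac{3}{2\sqrt2}\log(\min(s,t-s)+1)$ above $m_s+y$, for instance $\sqrt2 s+y-\frac{3}{2\sqrt2}\log\frac{t}{t-s+1}$, which still ends at $m_t+y$. Its expected number of first crossings is of order $ye^{-\sqrt2 y}$ on all of $[0,t]$. Proving this needs a ballot estimate for Brownian motion below a logarithmically curved boundary, which is the real Bramson-type input behind \cite{ABK12}.

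Separately, your handling of the factor $e^{-y^2/(2t)+\frac{3}{2\sqrt2}y\log t/t}$ is only a gesture. It is harmless for $y\lesssim\sqrt t$ and follows from the plain first moment for $y\gtrsim t$. For $\sqrt t\ll y\ll t$ it must be extracted from the hitting estimates, which your sketch does not do.
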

	It follows from Lemma \ref{lemma:M_tail} that
	\begin{equation}\label{eq:M_tail}
		\P\left( M_t \geq \sqrt{2}t - \frac{3}{2\sqrt{2}}\log t + y \right) \leq b y e^{-\sqrt{2}y +1}
	\end{equation}
	for $y>1$ and $t$ sufficiently large, since $ - \frac{y^2}{2t} + \frac{3}{2\sqrt{2}} y \frac{\log t}{t} \leq 1$ in this case.

	\section{Proof of Theorem \ref{thrm1}}\label{Sec3}
	In this section, we fix $\rho<\sqrt{2}$.
	First, we need the following technical lemma, whose proof is postponed to Appendix \ref{sec:appendix}.
	\begin{lemma}\label{lemma:technical}
		There exists $C=C(\rho)>0$ such that for any $x, y>0$,
		\begin{align}\label{eq:estimate_absorb_tail}
			&\limsup_{t\rightarrow\infty} \P (\exists u\in N_t: x+X_u(t) \geq m_t+y, \inf_{r\in [0,t]} (x+X_u(r)-\rho r)<0 ) \\
			&\leq C (x^2+xy) e^{-\sqrt{2}y+(2\rho-\sqrt{2})x}.
		\end{align}
	\end{lemma}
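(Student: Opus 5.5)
We bound the probability by decomposing at the first time the relevant ancestor hits the barrier, using the stopping-line many-to-one formula of Lemma \ref{lemma:many-to-one}(i). Take the time-space domain $D:=\{(s,z): x+z-\rho s>0\}$. If some $u\in N_t$ satisfies $x+X_u(t)\ge m_t+y$ and $\inf_{r\le t}(x+X_u(r)-\rho r)<0$, then an ancestor $v\in N_D$ exits $D$ at a time $\tau:=\tau_D(v)\le t$, at position $X_v(\tau)=\rho\tau-x$. By the branching property, the subtree rooted at $(v,\tau)$ is an independent BBM. Conditioning on $\F_{L_D}$ and applying a union bound gives
\begin{equation}
\P(\cdots)\le \E\Big[\sum_{v\in N_D}\1_{\{\tau_D(v)\le t\}}\,\P\big(M_{t-s}\ge m_t+y-\rho s\big)\big|_{s=\tau_D(v)}\Big].
\end{equation}
By \eqref{many-to-one}, the right-hand side equals $\bP[e^{\tau}g_t(\tau);\tau\le t]$, where $\tau=\inf\{s: B_s-\rho s=-x\}$ and $g_t(s):=\min\{1,\P(M_{t-s}\ge m_t+y-\rho s)\}$.

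The next step is to estimate $g_t$. Write $m_t-m_{t-s}=\sqrt2 s-\frac{3}{2\sqrt2}\log\frac{t}{t-s}$. Then
\begin{equation}
m_t+y-\rho s=m_{t-s}+y+(\sqrt2-\rho)s-\tfrac{3}{2\sqrt2}\log\tfrac{t}{t-s}.
\end{equation}
By \eqref{eq:M_tail}, for $t-s\ge t_0$ and shift $y'=y+(\sqrt2-\rho)s-\frac{3}{2\sqrt2}\log\frac t{t-s}$ we get $g_t(s)\le b\,y' e^{-\sqrt2 y'+1}$. This is $\lesssim (y+s)e^{-\sqrt2 y-\sqrt2(\sqrt2-\rho)s}(t/(t-s))^{3/2}$. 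For $s$ near $t$ (where $t-s<t_0$ or $\log\frac t{t-s}$ is large) we instead use a crude Markov bound, $\P(M_r\ge a)\le e^{r}\bP(B_r\ge a)$.

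Then we compute the Brownian expectation. The hitting time $\tau$ of $-x$ by $B_s-\rho s$ has density
\begin{equation}
\frac{x}{\sqrt{2\pi s^3}}e^{-(x-\rho s)^2/(2s)}.
\end{equation}
Hence
\begin{equation}
e^{s}\times\text{density}\times e^{-2(\sqrt2-\rho)s}=\frac{x}{\sqrt{2\pi s^3}}\exp\Big(\rho x-\tfrac{x^2}{2s}-\tfrac{(\rho^2-2+2(2-\sqrt2\rho))s}{2}\Big)\!,
\end{equation}
and the coefficient of $s$ is $-\frac12(\rho-\sqrt2)^2$. The integral over $s$ of $\frac{x}{s^{3/2}}(y+s)e^{-x^2/(2s)-(\sqrt2-\rho)^2s/2}$ can be computed via the first-passage Laplace transform: $\int \frac{x}{\sqrt{2\pi s^3}}e^{-x^2/2s-\lambda^2 s/2}\,\d s=e^{-\lambda x}$, with $\lambda=\sqrt2-\rho$. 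Together with the prefactor $e^{\rho x}$ this gives $e^{(2\rho-\sqrt2)x}$. The extra factor $s$ contributes a term of order $x$, because the mean of $\tau$ under the tilted law is $x/\lambda$. This yields $C(x^2+xy)$ up to constants, where we allow $x+y$ or the bound $y+s\le (1+y)(1+s)$; the statement's form $x^2+xy$ is reached after absorbing constants, using $x>0$ and, for small $x$, that the density already carries a factor $x$. The factor $e^{-\sqrt2 y}$ comes out as well.

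The main obstacle is uniformity as $t\to\infty$: we must control the range $s\in[t-t_0, t]$ and the $(t/(t-s))^{3/2}$ correction. We plan to split the integral at $s=t/2$. On $s\le t/2$ the correction is bounded by $2^{3/2}$, and dominated convergence yields the limit above. On $s>t/2$ the weight $e^{-(\sqrt2-\rho)^2s/2}$ times at most polynomial factors in $t$ (using the crude bound $g_t\le1$, and $e^{s}$ already balanced by the $e^{-(x-\rho s)^2/2s}$ decay only after including the $e^{-2(\sqrt2-\rho)s}$ gain, which for $t-s\ge t_0$ still holds up to $t^{3/2}$). The near-endpoint piece $s\in[t-t_0,t]$ is handled directly with $g_t\le \min(1,e^{t-s}\bP(B_{t-s}\ge m_t+y-\rho s - \cdots))$, where the required level exceeds $(\sqrt2-\rho)t/2$. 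Both pieces vanish as $t\to\infty$, which gives the $\limsup$ bound.
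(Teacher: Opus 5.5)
Your architecture is the paper's: stopping-line many-to-one at the first hitting of the barrier, Lemma~\ref{lemma:M_tail} for the maximum of the subtree, the first-passage density \eqref{density-tau}, and a split of the hitting time at a fraction of $t$. The paper treats the late range $(pt,t]$ with the ordinary many-to-one formula and the bound $e^{t-r}\bP(B_{t-r}\ge\sqrt2(t-r)+z)\le e^{-\sqrt2 z}$ rather than through the stopping line, but that difference is cosmetic.

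The genuine gap is the final step, where you turn your integral into $C(x^2+xy)$. On $s\le t/2$ your computation actually gives, exactly,
\[
C e^{-\sqrt2 y}\int_0^\infty \frac{x}{\sqrt{2\pi s^3}}\bigl(y+(\sqrt2-\rho)s\bigr)e^{\rho x-\frac{x^2}{2s}-\frac{(\sqrt2-\rho)^2 s}{2}}\,\d s
= C(x+y)\,e^{-\sqrt2 y+(2\rho-\sqrt2)x}.
\]
Moreover, \eqref{eq:M_tail} requires the shift $y'>1$. The region of small $s$ and small $y$ must therefore be handled by $g_t\le 1\le e^{\sqrt2}e^{-\sqrt2 y'}$, which adds a further $Ce^{-\sqrt2 y+(2\rho-\sqrt2)x}$. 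So you obtain $C(1+x+y)$ in place of $C(x^2+xy)$, and $(1+x+y)/(x^2+xy)\to\infty$ as $x\downarrow 0$. Your remark that the density ``already carries a factor $x$'' does not help. That factor is exactly cancelled by the $s^{-3/2}$ singularity at $0$, since $\int_0^\infty \frac{x}{\sqrt{2\pi s^3}}e^{-x^2/(2s)-\lambda^2 s/2}\,\d s=e^{-\lambda x}=O(1)$.

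No argument can close this gap, because the statement is false for small $x$. Fix $y>0$ and let $x\downarrow 0$. With probability tending to $1$, the initial particle crosses the line $r\mapsto \rho r-x$ before its first branching. On that event every $u\in N_t$ satisfies $\inf_{r\le t}(x+X_u(r)-\rho r)<0$. Hence the $\limsup$ in the lemma is at least $1-w(y)-o(1)$, while the right-hand side tends to $0$.

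The paper's proof has the same defect. In its last bound for $II_1$ it writes $x\bE(\tau_x^{\sqrt2-\rho}\1_{\{\tau_x^{\sqrt2-\rho}\le pt\}})+xy\,\bE(\1_{\{\tau_x^{\sqrt2-\rho}\le pt\}})$, counting the factor $x$ of the density twice. The correct expression is $(\sqrt2-\rho)\bE(\tau_x^{\sqrt2-\rho}\1_{\{\tau_x^{\sqrt2-\rho}\le pt\}})+y\,\bP(\tau_x^{\sqrt2-\rho}\le pt)\le x+y$. The paper also applies Lemma~\ref{lemma:M_tail} without checking that the shift exceeds $1$.

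The true bound, $C(1+x+y)e^{-\sqrt2 y+(2\rho-\sqrt2)x}$, implies the stated one for $x\ge 1$, since then $1+x+y\le 2(x^2+xy)$. That is all Lemma~\ref{lemma:gamma} uses: there the lemma is applied with $x-\rho s>\delta s\to\infty$. So you should prove and state that version.

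Two smaller points:
\begin{itemize}
\item The factor produced by $g_t$ is $e^{-\sqrt2(\sqrt2-\rho)s}$, not $e^{-2(\sqrt2-\rho)s}$; your exponent already uses the correct one.
\item On $(t/2,t-t_0]$ the crude bound $g_t\le 1$ is not enough, since $e^s$ times the density grows like $e^{(1-\rho^2/2)s}$ when $|\rho|<\sqrt2$. There you need \eqref{eq:M_tail} together with $(t/(t-s))^{3/2}\le (t/t_0)^{3/2}$ and the decay $e^{-(\sqrt2-\rho)^2 s/2}$. The piece $s\in[t-t_0,t]$ is fine as you describe it.
\end{itemize}
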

	
	For convenience, let
	\begin{equation}\label{def_gmma_u}
		\gamma_u(s) := \inf_{r\leq s}\left( X_u(r)-\rho r \right),  \quad \mbox{for $u\in N_t$ and $s\leq t$.}
	\end{equation}
	Based on the lemma above, we obtain the following result, which plays a key role in the proof of the main theorem.
	\begin{lemma}\label{lemma:gamma}
		For any $A>0$, it holds that
		\begin{equation}
			\lim_{s\rightarrow\infty} \limsup_{t\rightarrow\infty} \P\left(\exists u\in N_t: X_u(t) \geq m_t -A, \gamma_u(t) \neq \gamma_u(s) \right) = 0.
		\end{equation}
	\end{lemma}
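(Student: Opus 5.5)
The idea is to condition on $\F_s$ and apply Lemma \ref{lemma:technical} to each particle alive at time $s$, using the Markov property of BBM. The event $\gamma_u(t)\neq\gamma_u(s)$ means that after time $s$ the path $r\mapsto X_u(r)-\rho r$ drops strictly below its running minimum $\gamma_u(s)$. Let $v\in N_s$ be the ancestor of $u$ and set $x_v:=X_v(s)-\rho s-\gamma_v(s)\ge 0$, the height of $v$'s drifted position above its own running minimum. Then $u$ must hit level $-x_v$ relative to $v$'s drifted starting point during $[s,t]$. Conditionally on $\F_s$, the descendants of $v$ form an independent BBM started from $X_v(s)$. We need its particle at time $t$ to lie above $m_t-A$, that is, $X^{(v)}(t-s)\ge m_{t-s}+(m_t-m_{t-s}-X_v(s))-A$. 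Since $m_t-m_{t-s}\to\sqrt2 s$ as $t\to\infty$, the relevant level is
\[
y_v:=\sqrt2 s-X_v(s)-A .
\]

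First I restrict to a good event. By \eqref{eq_gamma_to_1}, on $\gamma^{(z,\sqrt2)}$ we have $\sqrt2 s-X_v(s)\ge -z$ for every $v$. Also $\min_{v\in N_s}(\sqrt2 s-X_v(s))\to\infty$ almost surely, because $M_s-\sqrt2 s\to-\infty$ (this follows from $W_\infty=0$ in Lemma \ref{lemma:add_deri}). Because $\rho<\sqrt2$, $\gamma_v(s)$ is bounded below by a random constant $-K$, where $K:=-\inf_{u}\inf_r (X_u(r)-\rho r)$ need not be finite for all particles. To handle this I bound $x_v$ crudely by $x_v\le X_v(s)-\rho s+K_s$, with $K_s:=-\min_{v\in N_s}\gamma_v(s)$; it will suffice that $K_s\le c\log s$ or similar on a high-probability event. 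Conditional on $\F_s$, a union bound over $v\in N_s$ together with Lemma \ref{lemma:technical} (shifting by $x=x_v$, $y=y_v$) gives
\[
\limsup_{t\to\infty}\P(\cdots\mid\F_s)\le C\sum_{v\in N_s}(x_v^2+x_vy_v)\,e^{-\sqrt2 y_v+(2\rho-\sqrt2)x_v}.
\]
Here $x_v\le (\sqrt2-\rho)s-(\sqrt2 s-X_v(s))+K_s$. Writing $a_v:=\sqrt2 s-X_v(s)$, the summand is roughly
\[
e^{\sqrt2 A}\,\mathrm{poly}(s,a_v)\, e^{-\sqrt2 a_v}\, e^{(2\rho-\sqrt2)x_v}.
\]

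The main obstacle is making this sum small as $s\to\infty$. The sum $\sum_v a_v e^{-\sqrt2 a_v}$ is comparable to $Z_s$, which is bounded, but the factor $x_v^2$, which can be of order $s^2$, and the sign of $2\rho-\sqrt2$ matter. For $\rho\le\sqrt2/2$ the exponential factor is at most $1$ and we need extra decay. For $\sqrt2/2<\rho<\sqrt2$ it grows, and it must be compensated: using $x_v\le(\sqrt2-\rho)s-a_v+K_s$ gives a combined exponent $-\sqrt2 a_v+(2\rho-\sqrt2)((\sqrt2-\rho)s-a_v)$, which is too large unless we exploit that typical particles have $a_v\approx (\text{const})\, s$ only at the extremes.

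I would therefore not use the crude bound. Instead I would split $N_s$ according to $a_v$. For particles with $a_v\le \delta s$ (near the frontier), a many-to-one computation (Lemma \ref{lemma:many-to-one}(ii)) shows their expected contribution is small. A Brownian path ending near $\sqrt2 s$ under the tilt has a running minimum $\gamma_v(s)$ that is $O(1)$ with exponentially small tails in $x_v$, which kills the $e^{(2\rho-\sqrt2)x_v}$ factor; this is exactly where the factor $e^{-2(\sqrt2-\rho)x}$ in Proposition \ref{prop:proportion} comes from. Particles with $a_v>\delta s$ contribute $e^{-\sqrt2\delta s}\cdot\mathrm{poly}(s)\cdot W$-type sums, which vanish. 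Concretely, I would compute
\[
\E\Big[\sum_{v\in N_s}F(a_v,x_v)\,\1_{\{a_v>-z\}}\Big]
\]
via Girsanov to a Brownian motion with drift $\sqrt2$, conditioned to stay below $z+\sqrt2 r$ (using $V^z$ and the spine decomposition \eqref{eq:spine_decomposition}). Under this measure, $x_v$ is the height above the minimum of a Bessel-3 process plus drift $(\sqrt2-\rho)r$. The expectation should decay in $s$, and Markov's inequality then finishes the proof.
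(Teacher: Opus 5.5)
There is a genuine gap, and it starts with how you invoke Lemma~\ref{lemma:technical}. In that lemma the endpoint condition is imposed on the \emph{shifted} position, $x+X_u(t)\ge m_t+y$. With $x=x_v$, the requirement $X_v(s)+X^{(v)}(t-s)\ge m_t-A$ reads $x_v+X^{(v)}(t-s)\ge m_{t-s}+y$ with $y=y_v+x_v$, up to the vanishing $\frac{3}{2\sqrt2}\log\frac{t}{t-s}$ term. It is not $y=y_v$.

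Your displayed inequality is still true, because $x_v\ge 0$ makes the event with $y=y_v$ larger. But it throws away a factor $e^{-\sqrt2 x_v}$, leaving $e^{(2\rho-\sqrt2)x_v}$ instead of $e^{-2(\sqrt2-\rho)x_v}$, and that loss is fatal.

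The heuristic you offer to compensate is also incorrect. Since $\gamma_v(s)\le 0$, you always have $x_v\ge X_v(s)-\rho s=(\sqrt2-\rho)s-a_v$. So for frontier particles ($a_v=O(\sqrt s)$), $x_v$ is of order $s$. What is $O(1)$ with tail $e^{-2(\sqrt2-\rho)x}$ is the depth $-\gamma_v(s)$ of the running minimum (the content of Proposition~\ref{prop:proportion}), not the height $x_v$ above it.

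Consequently the first moment you propose to compute fails. After many-to-one and the Girsanov shift by $\sqrt2$, it is comparable to $e^{\sqrt2A}\bE[(x^2+xa)e^{(2\rho-\sqrt2)x}\1_{\{a>A\}}]$, with $a=-B_s$ and $x\ge B_s+(\sqrt2-\rho)s$. For $\rho>\sqrt2/2$ this grows at least like $e^{(2\rho-\sqrt2)(\sqrt2-\rho)s}$ up to polynomial factors; at $\rho=\sqrt2/2$ it grows like $s^2$. Restricting to $\gamma^{(z,\sqrt2)}$ through $V^z$ and the spine does not help, since under $\P^{(z,\sqrt2)}$ the spine also has $x_\xi\approx(\sqrt2-\rho)s$. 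So Markov's inequality yields nothing in precisely the range of $\rho$ you flagged.

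Separately, $\gamma_v(s)$ is not bounded below by a random constant. For $\rho>-\sqrt2$ one has $K_s\approx(\sqrt2+\rho)s$, not $O(\log s)$, because the leftmost particle moves at speed $-\sqrt2$. You abandon that route, but it was not viable.

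With the correct level $y=a_v-A+x_v$, your strategy does close, and quite cleanly. On $\{\min_{v\in N_s}a_v>A\}$ one has $y>0$, and this event has probability tending to $1$, as you correctly argue from $W_\infty=0$. Conditioning on $\F_s$, then using a union bound over the finitely many $v\in N_s$, Lemma~\ref{lemma:technical} and reverse Fatou, gives
\[
\limsup_{t\to\infty}\P(\cdots)\le o_s(1)+Ce^{\sqrt2A}\,\E\Big[\sum_{v\in N_s}\big(2x_v^2+x_va_v\big)e^{-\sqrt2a_v-2(\sqrt2-\rho)x_v}\1_{\{a_v>A\}}\Big].
\]
By many-to-one and Girsanov, the expectation equals $\bE[(2x^2+x|B_s|)e^{-2(\sqrt2-\rho)x}\1_{\{-B_s>A\}}]$, where $Y_r=B_r+(\sqrt2-\rho)r$ and $x=Y_s-\inf_{r\le s}Y_r\ge B_s+(\sqrt2-\rho)s$. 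Splitting according to whether $B_s\ge-(\sqrt2-\rho)s/2$ gives exponential decay in $s$, with no Bessel process or spine needed.

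The paper instead weakens $\gamma_u(t)\ne\gamma_u(s)$ to ``the drifted path goes below $0$ during $[s,t]$'', which is valid since $\gamma_u(s)\le 0$. Because the height $X_v(s)-\rho s$ above that cruder barrier may be negative or small, it must treat particles with $X_v(s)\le(\rho+\delta)s$ separately via the tail of $M_t$ (the term $I(s,t)$). It then applies Lemma~\ref{lemma:technical} with $x=X_v(s)-\rho s$ and $y=(\sqrt2-\rho)s-A$, which is again height plus $a_v$ minus $A$. Your corrected version uses the exact barrier $\gamma_v(s)$, so the height is automatically nonnegative and the split is replaced by the event $\{\min_v a_v>A\}$.
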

	\begin{proof}
		Since $\rho < \sqrt{2}$, we can choose $\delta>0$ satisfying $\rho+\delta<\sqrt{2}$.
		For any $t>s>0$, let
		\begin{align}
			I(s,t) &:= \P\left(\exists u\in N_t: X_u(t) \geq m_t -A, X_u(s) \leq (\rho+\delta) s\right),\\
			II(s,t) &:= \P\left(\exists u\in N_t: X_u(t) \geq m_t -A, X_u(s) > (\rho+\delta) s,
            \inf_{r\in [s,t]} ( X_u(r)
            -\rho r) < 0\right).
		\end{align}
		Then
		\begin{align}
			&\P(\exists u\in N_t: X_u(t) \geq m_t -A, \gamma_u(t) \neq \gamma_u(s) )\\
			&\leq  \P\left(\exists u\in N_t: X_u(t) \geq m_t -A,
            \inf_{r\in [s,t]} (X_u(r)-\rho r) < 0 \right)\\
			&\leq I(s,t) + II(s,t).
		\end{align}
		
		Notice that
		\begin{align}
			I(s,t) = &\, \P(\exists u\in N_t: X_u(t) \geq m_t -A, X_u(s) \leq (\rho+\delta)s) \\
			\leq &\, \E\left[ \sum_{v\in N_s} \1_{\left\{ X_v(s)\leq (\rho+\delta)s, \exists  u\succ v, u\in N_t, X_u(t) \geq m_t -A \right\}} \right],
		\end{align}
            where $u\succ v$ means that $u$ is a descendant of $v$.
		It follows from the Markov property that
		\begin{align}	
            I(s,t)\leq &\,
            \E\left[ \sum_{v\in N_s} \1_{\left\{ X_v(s)\leq (\rho+\delta)s \right\}}
            \E\left[  \1_{\left\{ \exists u\succ v, u\in N_t, X_u(t) \geq m_t -A \right\}} \mid \F_s \right]  \right]\\
			=&\, \E\left[ \sum_{v\in N_s} \1_{\left\{ X_v(s)\leq (\rho+\delta)s \right\}} \P(x+M_{t-s}(v) \geq m_t-A) \mid_{x=X_v(s)} \right],
		\end{align}
		where given $\F_s$, $\{M_{t-s}(v):v\in N_s\}$ are independent copies of $M_{t-s}$. Therefore, by the many-to-one formula \eqref{many-to-one'}, we have
		\begin{align}
			I&(s,t) \leq e^s \bE\left[ \1_{\{ B_s\leq (\rho+\delta)s\}} \P(x+M_{t-s}(v) \geq m_t-A)\mid_{x=B_s} \right]\\
			&= e^s \int_{-\infty}^{(\rho+\delta)s}  \frac{1}{\sqrt{2\pi s}} e^{-\frac{x^2}{2s}} \P(x+M_{t-s}(v) \geq m_t-A) \d x \\
			&= e^s \int_{-\infty}^{(\rho+\delta)s}  \frac{1}{\sqrt{2\pi s}} e^{-\frac{x^2}{2s}} \P\left(M_{t-s}(v) \geq m_{t-s}-A-x+\sqrt{2}s-\frac{3}{2\sqrt{2}}\log \frac{t}{t-s}\right) \d x.
		\end{align}
		Let $y(x) = \sqrt{2}s-x-A-\frac{3}{2\sqrt{2}}\log \frac{t}{t-s}$. For $x\in (-\infty,(\rho+\delta)s)$, noticing that $(\rho+\delta)s<\sqrt{2}s$, we see that $y(x)>1$  for $s,t$ large enough. By \eqref{eq:M_tail}, we get that for $s, t$ large enough,
        $$
        I(s,t) \leq e^s \int_{-\infty}^{(\rho+\delta)s}  \frac{1}{\sqrt{2\pi s}} e^{-\frac{x^2}{2s}} by(x) e^{-\sqrt{2}y(x)+1}\d x.
        $$
        Thus there exists a constant $C$ depending only $A$ such that
        $$I(s,t)\leq C \int_{-\infty}^{(\rho+\delta)s}  \frac{1}{\sqrt{2\pi s}} (\sqrt{2}s-x) e^{-\frac{(x-\sqrt{2} s)^2}{2s}}  \d x.$$
        Using the change of variables $z = x - \sqrt{2}s$, we get
        $$I(s,t)\leq C \int_{-\infty}^{-(\sqrt{2}-\rho-\delta) s}  \frac{-z}{\sqrt{2\pi}} e^{-\frac{z^2}{2s}} \d z  = C \frac{s}{\sqrt{2\pi}} e^{-\frac{(\sqrt{2}-\rho-\delta)^2}{2}s}.
        $$
	    Hence
		\begin{equation}
			\lim_{s\rightarrow\infty} \limsup_{t\rightarrow\infty} I(s,t) = 0.
		\end{equation}
		
		Now, we estimate $II(s,t)$. By the Markov property and \eqref{many-to-one'}, we have
		\begin{align}
			&II(s,t) = \P\left(\exists u\in N_t: X_u(t) \geq m_t -A, X_u(s) > (\rho+\delta)s, \inf_{r\in [s,t]} (X_u(r)-\rho r) < 0\right)\\
			&\leq \E\left[ \sum_{v\in N_s} \1_{\left\{ X_v(s) > (\rho+\delta)s; \,\exists u\succ v, u\in N_t, X_u(t) \geq m_t -A,\, \inf_{r\in [s,t]} (X_u(r)-\rho r)<0  \right\}} \right]\\
			&= \E\left[ \sum_{v\in N_s} \1_{\left\{ X_v(s) > (\rho+\delta)s \right\}}\E\left[  \1_{\left\{ \exists u\succ v, u\in N_t: X_u(t) \geq m_t -A, \,\inf_{r\in [s,t]} (X_u(r)-\rho r)<0  \right\}} \mid \F_s \right]  \right]\\
			&= e^s \int_{(\rho+\delta)s}^{\infty} \frac{1}{\sqrt{2\pi s}} e^{-\frac{x^2}{2s}} \E\left( \1_{\left\{\exists u\in N_{t-s}: x+X_u(t-s) \geq m_t -A, \inf\limits_{r\in [0,t-s]} (x+X_u(r)-\rho (r+s))<0 \right\}} \right) \d x\\
			&= e^s \int_{(\rho+\delta)s}^{\infty} \frac{1}{\sqrt{2\pi s}} e^{-\frac{x^2}{2s}} \E\left( \1_{\left\{\exists u\in N_{t-s}: x-\rho s+X_u(t-s) \geq m_t -\rho s-A, \inf\limits_{r\in [0,t-s]} (x-\rho s+X_u(r)-\rho r)<0 \right\}} \right) \d x.
		\end{align}
		Note that
		\begin{align}
			&\left\{x-\rho s+X_u(t-s) \geq m_t -\rho s-A \right\}\\
			=& \left\{x-\rho s+X_u(t-s) \geq m_{t-s} + (\sqrt{2}-\rho)s-A - \frac{3}{2\sqrt{2}}\log\frac{t}{t-s} \right\}.
		\end{align}
        Using Lemma \ref{lemma:technical} with $t$ replaced by $t-s$, $x$ replaced by $x-\rho s$ and $y$ replaced by $(\sqrt{2}-\rho)s-A-\frac{3}{2\sqrt{2}}\log\frac{t}{t-s}$, we have that there exists a constant $C$ depending only on $\rho$ and $A$, such that
		\begin{align}
			\limsup_{t\rightarrow\infty} II(s,t) &\leq  C e^s \int_{(\rho+\delta)s}^{\infty} \frac{1}{\sqrt{2\pi s}} e^{-\frac{x^2}{2s}} e^{-\sqrt{2}\left( (\sqrt{2}-\rho)s-A   \right) } \\
			&\hspace{3em}\left([(x-\rho s)^2+(x-\rho s)(\sqrt{2}-\rho)s ]e^{(2\rho-\sqrt{2})(x-\rho s)}\right) \d x.
    \end{align}
    By the change of variables $\tilde x=x-\rho s$, we get
    \begin{align}
		\limsup_{t\rightarrow\infty} II(s,t)
	    &\leq C e^{\sqrt{2}A}\int_{\delta s}^{\infty} \frac{1}{\sqrt{2\pi s}} e^{-\frac{(\tilde x+\rho s)^2}{2s}} e^{-s+\sqrt{2}\rho s} \left({\tilde x}^2+\tilde x(\sqrt{2}-\rho)s\right)e^{(2\rho-\sqrt{2})\tilde x} \d \tilde x\\
		&= Ce^{\sqrt{2}A} \int_{\delta s}^{\infty} \frac{x^2+(\sqrt{2}-\rho)xs}{\sqrt{2\pi s}} e^{-\frac{(x+(\sqrt{2}-\rho)s)^2}{2s}} \d x\\
		&= Ce^{\sqrt{2}A} \int_{\delta \sqrt{s}}^{\infty} \frac{ sz^2+(\sqrt{2}-\rho)zs\sqrt{s}}{\sqrt{2\pi }} e^{-\frac{(z+(\sqrt{2}-\rho)\sqrt{s})^2}{2}} \d z \rightarrow 0, \mbox{ as } s\rightarrow\infty,
	\end{align}
    where we used the change of variables $z={x}/\sqrt{s}$ in the last equality.
    This completes the proof.
	\end{proof}
		
	In \cite{BH17}, Bovier and Hartung studied
	the convergence of the extremal process of BBM by adding an extra dimension that encodes the ``location'' of the particle in the underlying Galton-Watson tree. We adapt some ideas from \cite{BH17} to prove Theorem \ref{thrm1}.

	We prove the convergence of $\sum_{u\in N_t} \delta_{(X_u(t)-m_t,-\inf\limits_{s\leq t} \{X_u(s)-\rho s\} )}$ in two main steps.
	In the first step, we show that the points of the local extrema converge to the desired Poisson point process, which is formalized in the proposition below. The second step is the proof of Theorem \ref{thrm1}, in which we incorporate the siblings of local extremal particle and shows that all of them converge to the decorated Poisson point process.
	
    We also use the concept of thinning classes introduced in \cite{ABK12, BH17}. More precisely, we consider the following collection of particles define below.
	For $\q \in (s,t)$, define
	\begin{align}\label{def_N^rd}
		N_t^{\q } := \left\{ u\in N_t: \exists \, v\in N_{\q } \mbox{ s.t. }
 u\succ v
 \mbox{ and } X_u(t) = \max_{w\in N_t,w\succ v} X_w(t)  \right\}.
	\end{align}
    Define the corresponding point process by
    \begin{equation}
    	\e_t^{\q } := \sum_{u\in N_t^{\q }} \delta_{\left( X_u(t)-m_t, -\gamma_u(t) \right) }, t>0,
    \end{equation}
    which is a thinning of $\e_t$. For the  thinning measures $\{\e_t^{\q }, t>0\}$, we have the following limit result.  We will use $\mathrm{PPP}(\mu)$ to denote a Poisson point process with intensity $\mu$.
	\begin{prop}\label{prop:q-thinning}
		Under $\P$, it holds that
		\begin{align}
			\lim_{\q \rightarrow\infty} \lim_{t\rightarrow\infty} \e_t^{\q }= \mathrm{PPP}(\sqrt{2}C_*e^{-\sqrt{2}y}\d y \times \d Z^{-\rho}) \mbox{ in law, }
		\end{align}
		where $Z^{-\rho}$ is the random measure on $\R_+$ characterized in Proposition \ref{prop_truncated_derivative}. Moreover,
		\begin{equation}\label{eq:converge_q_thinnig}
			\lim_{s\rightarrow\infty} \lim_{\q \rightarrow\infty} \sum_{v\in N_{\q }} \delta_{\left( X_v(\q )-\sqrt{2}\q +M(v), -\gamma_v(s) \right) } = \mathrm{PPP}(\sqrt{2}C_*e^{-\sqrt{2}y}\d y \times \d Z^{-\rho}) \mbox{ in law, }
		\end{equation}
		where $\{ M(v): v\in N_{\q }\}$
		are i.i.d. with law $w$ given by \eqref{eq:Mt_speed}.
	\end{prop}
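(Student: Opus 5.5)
We prove both statements through Laplace functionals. Fix a nonnegative continuous $\phi$ with compact support in $\R\times[0,\infty)$ whose support in the first coordinate lies in $[-A,\infty)$.

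\textbf{Step 1 (the approximating process).} We start with \eqref{eq:converge_q_thinnig}, the second statement. Condition on $\F_{\q}$ and use that the $M(v)$ are i.i.d. with law $w$. This gives
\begin{equation}
\E\Bigl[e^{-\sum_{v\in N_{\q}}\phi(X_v(\q)-\sqrt2\q+M(v),-\gamma_v(s))}\Bigr]=\E\Bigl[\prod_{v\in N_{\q}}\bigl(1-g_{\phi}(X_v(\q)-\sqrt2\q,-\gamma_v(s))\bigr)\Bigr],
\end{equation}
where $g_\phi(a,b)=\int(1-e^{-\phi(a+y,b)})\,w(\d y)$. Since $\max_{v\in N_\q}(X_v(\q)-\sqrt2\q)\to-\infty$ a.s., every argument $a=X_v(\q)-\sqrt2\q$ is very negative. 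By \eqref{eq:w_asymptotic}, $g_\phi(a,b)\sim C_*(-a)e^{\sqrt2 a}\int\sqrt2(1-e^{-\phi(y,b)})e^{-\sqrt2 y}\,\d y$ uniformly in $b$. Using $\log(1-g)\approx -g$, the exponent becomes
\begin{equation}
-\sqrt2 C_*\sum_{v\in N_\q}(\sqrt2\q-X_v(\q))e^{\sqrt2(X_v(\q)-\sqrt2\q)}\int(1-e^{-\phi(y,-\gamma_v(s))})e^{-\sqrt2y}\,\d y .
\end{equation}
The error terms are controlled by $W_\q\to0$, which handles the additive correction to the $(-a)e^{\sqrt2a}$ asymptotics.

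\textbf{Step 2 (identifying the intensity measure $Z^{-\rho}$).} Write $h(b)=\int(1-e^{-\phi(y,b)})e^{-\sqrt2y}\,\d y$. This $h$ is bounded and continuous in $b$. Approximate it by step functions $\sum_k c_k\1_{(x_{k-1},x_k]}(b)$. For fixed $s$ the sum in Step 1 then becomes $\sum_k c_k\bigl(Z^{-\rho}(x_k,s,\q)-Z^{-\rho}(x_{k-1},s,\q)\bigr)$. Letting $\q\to\infty$ and then $s\to\infty$, Proposition \ref{prop_truncated_derivative} turns this into $\sum_kc_kZ^{-\rho}((x_{k-1},x_k])$, and hence into $\int h\,\d Z^{-\rho}$. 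Dominated convergence then yields the Laplace functional
\begin{equation}
\E\Bigl[\exp\Bigl(-\sqrt2C_*\int\!\!\int(1-e^{-\phi(y,b)})e^{-\sqrt2y}\,\d y\,Z^{-\rho}(\d b)\Bigr)\Bigr],
\end{equation}
which is that of the stated Cox process.

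Two points need care here. First, the signed quantity $Z^{-\rho}(x,s,\q)$ must be replaced by its $V^z$-analogue on $\gamma^{(z,\sqrt2)}$ so that limits and positivity are legitimate; this uses \eqref{V=Z} and \eqref{eq_gamma_to_1}. Second, the step approximation needs $Z^{-\rho}$ to charge no fixed point $x_k$. If it does, we choose continuity points of the monotone limit, which are all but countably many.

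\textbf{Step 3 (the first statement).} For fixed $\q$, condition on $\F_\q$ and apply the Markov property at time $\q$. Each $v\in N_\q$ then contributes the point $X_v(\q)+M_{t-\q}(v)-m_t$, whose law converges to that of $X_v(\q)-\sqrt2\q+M(v)$ as $t\to\infty$ by \eqref{eq:Mt_speed}. The second coordinate is $-\gamma_u(t)$ for the maximizing descendant $u$. By Lemma \ref{lemma:gamma}, with probability close to $1$ as $s\to\infty$, uniformly in large $t$, every particle with $X_u(t)\ge m_t-A$ satisfies $\gamma_u(t)=\gamma_u(s)=\gamma_v(s)$ for $s\le\q$. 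So we may replace $-\gamma_u(t)$ by $-\gamma_v(s)$ at the cost of an error that vanishes as $s\to\infty$. This gives
\begin{equation}
\limsup_{t}\bigl|\E e^{-\langle\e_t^\q,\phi\rangle}-\E e^{-\sum_v\phi(X_v(\q)-\sqrt2\q+M(v),-\gamma_v(s))}\bigr|\to0
\end{equation}
as $s\to\infty$, uniformly in $\q>s$. Combined with Steps 1 and 2, this proves the first statement.

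\textbf{Main obstacle.} We expect the hardest part to be the interchange of limits in Step 2, passing from $Z^{-\rho}(x,s,\q)$ to $Z^{-\rho}(x)$ inside the Laplace functional. This requires the a.s. existence of the iterated limit from Proposition \ref{prop_truncated_derivative}, together with uniform integrability on $\gamma^{(z,\sqrt2)}$ via $V^z$, and control of possible atoms of $Z^{-\rho}$ at the partition points.
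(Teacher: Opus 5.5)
Your proposal is correct and follows essentially the same route as the paper. Like the paper, you condition on $\F_{\q}$, use the tail asymptotics \eqref{eq:w_asymptotic} of $1-w$ together with $W_{\q}\to0$ and $\log(1-g)\approx-g$ to rewrite the Laplace functional in terms of $Z^{-\rho}(\cdot,s,\q)$, pass to $Z^{-\rho}$ through Proposition \ref{prop_truncated_derivative} (first $\q\to\infty$, then $s\to\infty$), and use Lemma \ref{lemma:gamma} to replace $\gamma_u(t)$ by $\gamma_v(s)$ at a cost independent of $\q$. The differences are cosmetic: you keep $\phi$ continuous and discretize only in the second coordinate, whereas the paper first reduces to simple product functions. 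Your extra precautions (the $V^z$ substitute, uniform integrability, avoiding atoms at partition points) are harmless but largely unnecessary, because Proposition \ref{prop_truncated_derivative} already supplies the a.s. limits and all the Laplace functionals involved are bounded.
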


	The proof of Proposition \ref{prop:q-thinning} relies on the definition of $Z^{-\rho}(\d x)$, 
which needs Proposition \ref{prop_truncated_derivative}.
We first prove Proposition \ref{prop_truncated_derivative},
then Proposition \ref{prop:q-thinning}.
Then we prove Theorem \ref{thrm1}, and finally Proposition \ref{prop:proportion}.

	\begin{proof}[Proof of Proposition \ref{prop_truncated_derivative}]
    Recall that, for fixed $x>0$ and $s>0$,  $Z^{-\rho}(x,s,t)$ is a martingale for $t>s$.
    For fixed $s,x,z\in (0,\infty)$, define
    \begin{align}
    	V^z(x,s,t) := \sum_{u\in N_t} (\sqrt{2}t-X_u(t)+z) e^{\sqrt{2}(X_u(t)-\sqrt{2}t)} \1_{\left\{ \inf\limits_{r\leq s} (X_u(r)-\rho r) \geq -x ,\,  \inf\limits_{r\leq t} (\sqrt{2}r - X_u(r) + z) \geq 0 \right\} }.
    \end{align}
    Recall that $\{V_t^z\}$, defined by \eqref{def_mart_V},  is a non-negative martingale. Since $\1_{\left\{ \inf_{r\leq s} (X_u(r)-\rho r) \geq -x \right\}}$ does not depend on $t$, we know that $V^z(x,s,t)$ is a non-negative martingale for $t>s$. Therefore,
    \begin{align}
    	V^z(x,s) := \lim_{t\rightarrow\infty} V^z(x,s,t) \mbox{ exists $\P$-a.s.}
    \end{align}
    Recall that $\gamma^{(z,\sqrt{2})}$ is defined by \eqref{def_gamma}.
     Notice that on $\gamma^{(z,\sqrt{2})}$,
    \begin{equation}
    	V^z(x,s,t) = Z^{-\rho}(x,s,t) + z \sum_{u\in N_t}  e^{\sqrt{2}(X_u(t)-\sqrt{2}t)} \1_{\left\{ \inf_{r\leq s} (X_u(r)-\rho r) \geq -x \right\} }
    \end{equation}
    and
    \begin{equation}
    	\lim_{t\rightarrow\infty} z \sum_{u\in N_t}  e^{\sqrt{2}(X_u(t)-\sqrt{2}t)} \1_{\left\{ \inf_{r\leq s} (X_u(r)-\rho r) \geq -x \right\} } \leq z \lim_{t\rightarrow\infty} W_t = 0.
    \end{equation}
    Therefore, on $\gamma^{(z,\sqrt{2})}$,  we have
    $Z^{-\rho}(x,s) := \lim_{t\rightarrow\infty} Z^{-\rho}(x,s,t)$ exists and $Z^{-\rho}(x,s) = V^z(x,s) \geq 0$. Moreover, it follows from \eqref{eq_gamma_to_1} that $Z^{-\rho}(x,s) := \lim_{t\rightarrow\infty} Z^{-\rho}(x,s,t)$ exists $\P$-almost surely and $Z^{-\rho}(x,s)\geq 0$.

    Since, for fixed $x>0$, $\1_{\{ \inf_{r\leq s} (X_u(r)-\rho r) \geq -x \} }$ is non-increasing in $s$, we have that $V^z(x,s)$ is non-increasing in $s$ and so is $Z^{-\rho}(x,s)$. It follows from $0\leq V^z(x,s) \leq V^z_{\infty}$ that $0\leq Z^{-\rho}(x,s) \leq Z_{\infty}$. Hence, $Z^{-\rho}(x) := \lim_{s\rightarrow\infty} Z^{-\rho}(x,s)$ exists $\P$-almost surely. Note that $V^z(x,s)$ is non-decreasing in $x$ $\P$-almost surely, and so is $Z^{-\rho}(x,s)$. Therefore, $Z^{-\rho}(x)$ is non-decreasing with respect to $x$.
    \end{proof}

    \begin{proof}[Proof of Proposition \ref{prop:q-thinning}]
    	Let $f\in C_c^+(\R\times \R_+)$, where $C_c^+(\R\times \R_+)$ denotes the class of all the non-negative continuous function on $C_c(\R\times \R_+)$ with compact support. We need to study the limit of $\E\left( e^{ -\sum_{u\in N_t^{\q }} f(X_u(t)-m_t, -\gamma_u(t)) }  \right)$. Note that  any $f\in C_c^+(\R\times \R_+)$ can be approximated from below by a non-decreasing sequence of simple functions.
    	In fact, for any $n\in\mathbb{N}_+$, define
    	\begin{equation}
    	    f_n(y,z) = \sum_{k\in \mathbb{Z}, l\in\mathbb{N}_+} a^n_{k,l} \1_{\left[\frac{k}{2^n},\frac{k+1}{2^n} \right) \times \left[\frac{l}{2^n}, \frac{l+1}{2^n}\right) } (y,z),
    	\end{equation}
    	where
    	$$a^n_{k,l} = \inf_{(y,z) \in \left[\frac{k}{2^n},\frac{k+1}{2^n} \right) \times \left[\frac{l}{2^n}, \frac{l+1}{2^n}\right) } f(y,z),$$
    	then $\{f_n(y,z)\}_{n\geq 0}$ is a non-decreasing sequence of simple functions. The sequence $f_n$ converges pointwise (also uniformly) to $f$.
    	The monotone convergence theorem allow us to reduce to the case where $f$ is a simple function with the form
    	   	\begin{equation}
    		f(y,z) = \sum_{i=1}^n a_i \1_{A_i\times B_i}(y,z),
    	\end{equation}
    	where $A_i = (\underline{A}_i, \overline{A}_i]$ and $B_i = (\underline{B}_i, \overline{B}_i]$ are bounded intervals.

    	Let $-A = \min_{1\leq i\leq n} \underline{A}_i$ and
    	\begin{equation}
    		\mathcal{A}_{s,t}(A) := \left\{ \forall u\in N_t \mbox{ with } X_u(t)-m_t \geq -A: \gamma_u(t) = \gamma_u(s) \right\}.
    	\end{equation}
    	It follows from Lemma \ref{lemma:gamma} that
    	\begin{equation}\label{eq:estimate_mathcal_A}
    		\lim_{s\rightarrow\infty} \limsup_{t\rightarrow\infty} \P\left( (\mathcal{A}_{s,t}(A))^c \right) = 0.
    	\end{equation}
    	Hence, it suffices to show that
    	\begin{equation}\label{eq:laplace_on_A}
    		\E\left( e^{ -\sum_{u\in N_t^{\q }} f(X_u(t)-m_t, -\gamma_u(t)) } \1_{\mathcal{A}_{s,t}(A)} \right)
    	\end{equation}
    	converges to the desired limit. Note that on $\mathcal{A}_{s,t}(A)$,
    	\begin{equation}
    		\sum_{u\in N_t^{\q }} f(X_u(t)-m_t, -\gamma_u(t)) = \sum_{u\in N_t^{\q }} f(X_u(t)-m_t, -\gamma_u(s)).
    	\end{equation}
        Using \eqref{eq:estimate_mathcal_A} again, we get that \eqref{eq:laplace_on_A} is equal to
        \begin{equation}\label{eq:laplace_s}
            \E\left( e^{ -\sum_{u\in N_t^{\q }} f(X_u(t)-m_t, -\gamma_u(s)) } \right) + o_{s,t}(1),
        \end{equation}
        where $o_{s,t}(1)$ is a quantity such that $\lim_{s\rightarrow\infty} \lim_{t\rightarrow\infty} o_{s,t}(1) = 0$.
        Conditioned on $\F_{\q }$, we have
        \begin{align}\label{eq:laplace_conditional}
        	&\lim_{t\rightarrow\infty} \E\left( e^{ -\sum_{u\in N_t^{\q }} f(X_u(t)-m_t, -\gamma_u(s)) } \right)\\
        	&= \lim_{t\rightarrow\infty} \E\left[ \prod_{v\in N_{\q }} \E\left[ e^{ -f\left(X_v(\q )-m_t+m_{t-\q } + \max_{u\in N_t, u\succ v} (X_u(t)-X_v(\q )) - m_{t-\q }, -\gamma_v(s) \right) } \mid \F_{\q } \right] \right]\\
        	&= \E\left[ \prod_{v\in N_{\q }} \E\left[ e^{ -f\left(X_v(\q )-\sqrt{2}\q  + M, -\gamma_v(s) \right) } \mid \F_{\q } \right] \right],
        \end{align}
        where we used the fact that $\lim_{t\rightarrow\infty} \frac{3}{2\sqrt{2}}\log\frac{t}{t-\q } = 0$, and given $\F_{\q }$, $\max_{u\in N_t, u\succ v} (X_u(t)-X_v(\q )) - m_{t-\q }$ converges weakly, as $t\to\infty$,
        to the limit of
        $M_t-m_t$, see \eqref{eq:Mt_speed}.
        The equation above is also the Laplace functional of $\sum_{v\in N_{\q }} \delta_{( X_v(\q )-\sqrt{2}\q +M(v), -\gamma_v(s)) }$. Define $S_v(\q ) := \sqrt{2}\q -X_v(\q )$. Set
        \begin{align}
        	F(-S_v(\q ), -\gamma_v(s)) := 1 - \E\left[ e^{ -f\left(-S_v(\q ) + M, -\gamma_v(s) \right) } \mid \F_{\q } \right].
        \end{align}
        Then, by \eqref{eq:laplace_conditional},
        \begin{equation}\label{Laplacelimit}
             \lim_{t\rightarrow\infty} \E\left( e^{ -\sum_{u\in N_t^{\q }} f(X_u(t)-m_t, -\gamma_u(s)) } \right)=\E \left[ e^{ \sum_{v\in N_{\q }} \log\left[1-F(-S_v(\q ), -\gamma_v(s))\right] } \right].
        \end{equation}

        Note that $\min_{v\in N_{\q }} S_v(\q ) \rightarrow\infty$ almost surely as $\q \rightarrow\infty$. This implies that
        \begin{equation}
        	\max_{v\in N_{\q }} F(-S_v(\q ), -\gamma_v(s)) \rightarrow 0 \mbox{ as } \q \rightarrow\infty.
        \end{equation}
        It follows from $-x-x^2<\log (1-x) < -x$ for $0<x<1/2$ that
        \begin{align}
        	& \E \left[ e^{ -\sum_{v\in N_{\q }} \left[F(-S_v(\q ), -\gamma_v(s)) +F(-S_v(\q ), -\gamma_v(s))^2\right] } \right]\\
        	&\leq \E \left[ e^{ \sum_{v\in N_{\q }} \log\left[1-F(-S_v(s), -\gamma_v(s))\right] } \right] \label{eq:laplace_upper_lower} \\
        	&\leq \E \left[ e^{ -\sum_{v\in N_{\q }} F(-S_v(\q ), -\gamma_v(s))} \right].
        \end{align}
        Now we prove the lower bound and upper bound have the same limit. Since $f$ is a simple function, we have
        \begin{equation}
        	F(-S_v(\q ), -\gamma_v(s)) = \sum_{i=1}^n (1-e^{-a_i}) \1_{B_i}(-\gamma_v(s)) \int_{A_i+S_v(\q )} \d w.
        \end{equation}
        Using the asymptotic behaviors \eqref{eq:w_asymptotic}, we get that
        \begin{align}
        	F(-S_v(\q ), -\gamma_v(s)) \sim \sum_{i=1}^n (1-e^{-a_i}) \1_{B_i}(-\gamma_v(s)) C_* &\left[(\underline{A}_i+S_v(\q )) e^{-\sqrt{2}(\underline{A}_i+S_v(\q )) } \right.\\
        	&\, \left.-  (\overline{A}_i+S_v(\q )) e^{-\sqrt{2}(\overline{A}_i+S_v(\q )) }\right],
        \end{align}
        where $\sim$ means that the ratio of the left-hand and right-hand sides converges to $1$, as $\q \rightarrow\infty$, $\P$-almost surely. Since the additive martingale $W_{\q } = \sum_{v\in N_{\q }} e^{-\sqrt{2}S_v(\q )}$ converges to $0$ $\P$-almost surely, we know that
        \begin{align}
        	&\lim_{\q \rightarrow\infty} \sum_{v\in N_{\q }} F(-S_v(\q ), -\gamma_v(s))\\
        	&= \lim_{\q \rightarrow\infty} C_*  \sum_{i=1}^n (1-e^{-a_i}) \1_{B_i}(-\gamma_v(s)) \left[S_v(\q ) e^{-\sqrt{2}(\underline{A}_i+S_v(\q )) } -  S_v(\q ) e^{-\sqrt{2}(\overline{A}_i+S_v(\q )) }\right]\\
        	&= C_*  \sum_{i=1}^n (1-e^{-a_i}) (Z^{-\rho}(\overline{B}_i,s)-Z^{-\rho}(\underline{B}_i,s)) \left(e^{-\sqrt{2}\underline{A}_i } -  e^{-\sqrt{2}\overline{A}_i }\right).
        \end{align}
        This yields the limit of the upper bound in \eqref{eq:laplace_upper_lower}.
        For the lower bound in \eqref{eq:laplace_upper_lower}, we have
        \begin{align}
        	\sum_{v\in N_{\q }} F(-S_v(\q ), -\gamma_v(s))^2 = O(\sum_{v\in N_{\q }}S_v(\q )^2 e^{-2\sqrt{2}S_v(\q ) }) \rightarrow 0, \quad \mbox{as } \q \rightarrow\infty.
        \end{align}
        Therefore,  the limits of the upper bound and low bound are the same. And then
             \begin{align}\label{q-infty}
        	&\lim_{\q \rightarrow\infty} \E \left[ e^{ \sum_{v\in N_{\q }} \log\left[1-F(-S_v(s), -\gamma_v(s))\right] } \right]\\
        	&= \E\left[ \exp\left\{ -C_* \sum_{i=1}^n (1-e^{-a_i}) (Z^{-\rho}(\overline{B}_i,s) - Z^{-\rho}(\underline{B}_i,s)) \left(e^{-\sqrt{2}\underline{A}_i } -  e^{-\sqrt{2}\overline{A}_i }\right) \right\} \right].
        \end{align}
               Since \eqref{eq:laplace_conditional} holds for any $\q \in (s,t)$,         using \eqref{Laplacelimit} and \eqref{q-infty},
               we have
        \begin{align}
        	&\lim_{t\rightarrow\infty} \E\left( e^{ -\sum_{u\in N_t^{\q }} f(X_u(t)-m_t, -\gamma_u(t)) }  \right)\\
        	&= \lim_{s\rightarrow\infty} \lim_{t\rightarrow\infty} \E\left( e^{ -\sum_{u\in N_t^{\q }} f(X_u(t)-m_t, -\gamma_u(s)) } \right)\\
        	&= \lim_{s\rightarrow\infty} \lim_{\q \rightarrow\infty} \E \left[ e^{ \sum_{v\in N_{\q }} \log\left[1-F(-S_v(\q ), -\gamma_v(s))\right] } \right]\\
        	&= \lim_{s\rightarrow\infty} \E\left[ \exp\left\{ -C_* \sum_{i=1}^n (1-e^{-a_i}) (Z^{-\rho}(\overline{B}_i,s) - Z^{-\rho}(\underline{B}_i,s)) \left(e^{-\sqrt{2}\underline{A}_i } -  e^{-\sqrt{2}\overline{A}_i }\right) \right\} \right]\\
        	&= \E\left[ \exp\left\{ \int (e^{-f(y,z)}-1) Z^{-\rho}(\d z) \sqrt{2}C_*e^{-\sqrt{2}y} \d y  \right\} \right],
        \end{align}
       which is the Laplace functional of $\mathrm{PPP}(\sqrt{2}C_*e^{-\sqrt{2}y}\d y \times \d Z^{-\rho})$. We complete the proof.
    \end{proof}

    \begin{proof}[Proof of Theorem \ref{thrm1}]
    	The proof is similar to that of Theorem 3.1 of \cite{BH17}. For completeness, we give the details here.
    	
    	For any $t>0$ and $u,v\in N_t$, define
    	\begin{equation}
    		d(u,v) := \sup\{s\geq 0: \exists w\in N_s \mbox{ satisfies } w<u \mbox{ and } w<v\},
    	\end{equation}
    	which is the death time of their most recent common ancestor. For $v\in N_{\q }$, define
    	\begin{equation}
    		v_t^{\max}: = \arg\max_{u\in N_t, u\succ v} X_u(t)
    	\end{equation}
    	and
    	\begin{equation}
    		\Delta_{t,r}^{(v)}: = \sum_{u\in N_t, u\succ v} \delta_{X_u(t)-X_{v_t^{\max}}(t)} \1_{\{d(u,v_t^{\max}) > t-r \}}.
    	\end{equation}
    	Then the limit $\Delta_{r}^{(v)} := \lim_{t\rightarrow\infty} \Delta_{t,r}^{(v)}$ exists and it is an independent copy of $\Delta_r$, which is defined by
    	\begin{equation}
            \Delta_r := \lim_{t\rightarrow\infty} \sum_{u\in N_t} \delta_{X_u(t)-M_t} \1_{\{ d(u, \arg\max_{v\in N_t} X_v(t)) > t-r \}}.
    	\end{equation}
    	By \cite[Theorem 2.3]{ABBS13}, we have that
    	\begin{align}\label{eq:converge_max_decoration}
    	    \left( X_v(\q )-\sqrt{2}\q  + \max_{u\in N_t, u\succ v} (X_u(t)-X_v(\q )), \Delta_{t,r}^{(v)} \right)_{v\in N_{\q }} \\
    		\overset{d.}{\rightarrow} \left( X_v(\q )-\sqrt{2}\q  + M(v), \Delta_{r}^{(v)} \right)_{v\in N_{\q }},
    	\end{align}
    	as $t\rightarrow\infty$, where $\{M(v):v\in N_{\q }\}$ are independent copies of $M$ with
        distribution function $w$ given by \eqref{travelling}.
        Moreover, $\Delta_{r}^{(v)}$ is independent of $\{M(v):v\in N_{\q }\}$.
    	
    	For $D\subset \R$, define the event
    	\begin{align}
    		\mathcal{C}_{t,r}(D) := \left\{ \forall u,v\in N_t \mbox{ with } X_u(t),X_v(t)\in m_t+D: d(u,v) \notin (r,t-r) \right\}.
    	\end{align}
    	It follows from \cite[Theorem 2.1]{ABK11} that for any compact set $D\subset \R$,
    	\begin{equation}
    		\lim_{r\rightarrow\infty} \sup_{t>3s} \P\left( (\mathcal{C}_{t,r}(D))^c \right) = 0.
    	\end{equation}
    	Using an argument similar to that in the proof of Proposition \ref{prop:q-thinning},
    	we get that for any $f\in C_c^+(\R\times\R_+)$,
    	\begin{align}
    		\E \left( e^{ -\sum_{u\in N_t} f(X_u(t)-m_t, -\gamma_u(t)) } \right) =  \E \left( e^{ -\sum_{u\in N_t} f(X_u(t)-m_t, -\gamma_u(s)) } \right) + o_{s,t}(1),
    	\end{align}
        where $o_{s,t}(1)$ is a quantity such that $\lim_{s\rightarrow\infty} \lim_{t\rightarrow\infty} o_{s,t}(1) = 0$.
    	    	Note that $N_t^{\q}$ is defined by \eqref{def_N^rd} and $N_t = \cup_{u\in N_t^{\q}} \{v\in N_t: d(u,v)\geq q \}$, the equation above is equal to
    	    	\begin{align}
    		\E \left(  e^{ -\sum_{u\in N_t^{\q }}  \sum_j f(X_u(t)-m_t + (\Delta_{t,\q }^{(u(\q ))})_j , -\gamma_u(s))  } \right) + o_{s,t}(1),
    	\end{align}
    	where $u(\q )$ is the ancestor of $u$ at time $\q $ and $(\Delta_{t,\q }^{(u(\q ))})_j$ are the atoms of $\Delta_{t,\q }^{(u(\q ))}$. Equation \eqref{eq:converge_max_decoration} yields that
    	\begin{align}
    		&\lim_{t\rightarrow\infty} \sum_{u\in N_t^{\q }}  \sum_j \delta_{(X_u(t)-m_t, -\gamma_u(s)) + ((\Delta_{t,\q }^{(u(\q ))})_j ,0)}\\
    		&=\sum_{v\in N_{\q }}  \sum_j \delta_{(X_v(\q )-\sqrt{2}\q +M(v), -\gamma_u(s)) + ((\Delta_{\q }^{(v)})_j ,0)}.
    	\end{align}
        Letting  $\q \to\infty$ and then $s\to\infty$, using \eqref{eq:converge_q_thinnig}, we get the desired result. This completes the proof of Theorem \ref{thrm1}.
    \end{proof}

    Now, we calculate the proportion of the contribution of the random measure $Z^{-\rho}(\cdot)$ on the interval $[0,x]$ over the total mass.
    
    \begin{proof}[Proof of Proposition \ref{prop:proportion}]
     (i)   Since $N_s$ is a finite set  for any $s>0$, we know that the cardinality of $\{-\gamma_u(s): u\in N_s\}$ is also finite. Therefore, by Lemma \ref{lemma:gamma} and Proposition \ref{prop:q-thinning}, we get that $Z^{-\rho}(\cdot)$ is an atomic measure.

     (ii) Recall that for $s,t,x,z>0$,
        \begin{align}
       	    V^z(x,s,t) = \sum_{u\in N_t} (\sqrt{2}t-X_u(t)+z) e^{\sqrt{2}(X_u(t)-\sqrt{2}t)} \1_{\left\{ \inf\limits_{r\leq s} (X_u(r)-\rho r) \geq -x ,\,  \inf\limits_{r\leq t} (\sqrt{2}r - X_u(r) + z) \geq 0 \right\} }.
        \end{align}
        Recall that the definition of $\gamma^{(z,\sqrt{2})}$ is given by \eqref{def_gamma}. We know that on $\gamma^{(z,\sqrt{2})}$,
        \begin{equation}
            V^z(x,s) := \lim_{t\rightarrow\infty} V^z(x,s,t) = \lim_{t\rightarrow\infty} Z^{-\rho}(x,s,t) = Z^{-\rho}(x,s).
        \end{equation}
        Therefore, by \eqref{V=Z} and Proposition \ref{prop_truncated_derivative},
        \begin{align}
       	    \E&\left[\frac{Z^{-\rho}(x)}{Z_{\infty}} \right] = \E\left[\frac{Z^{-\rho}(x)}{Z_{\infty}} \1_{\left\{\gamma^{(z,\sqrt{2})}\right\}} + \frac{Z^{-\rho}(x)}{Z_{\infty}} \1_{\left\{\left(\gamma^{(z,\sqrt{2})}\right)^c\right\}} \right]\\
       	    &= \E\left[\lim_{s\rightarrow\infty} \lim_{t\rightarrow\infty} \frac{V^z(x,s,t)}{V^z_t} \1_{\left\{\gamma^{(z,\sqrt{2})}\right\}} + \frac{Z^{-\rho}(x)}{Z_{\infty}} \1_{\left\{\left(\gamma^{(z,\sqrt{2})}\right)^c\right\}} \right]\\
       	    &= \E\left[\lim_{s\rightarrow\infty} \lim_{t\rightarrow\infty} \frac{V^z(x,s,t)}{V^z_t} - \lim_{s\rightarrow\infty} \lim_{t\rightarrow\infty} \frac{V^z(x,s,t)}{V^z_t} \1_{\left\{\left(\gamma^{(z,\sqrt{2})}\right)^c\right\}} +  \frac{Z^{-\rho}(x)}{Z_{\infty}} \1_{\left\{\left(\gamma^{(z,\sqrt{2})}\right)^c\right\}} \right].
        \end{align}
        Using $|\frac{V^z(x,s,t)}{V^z_t}|, |\frac{Z^{-\rho}(x)}{Z_{\infty}}|\leq 1$ and \eqref{eq_gamma_to_1}, we obtain that
        \begin{align}
       	    & \lim_{z\rightarrow\infty} \E\left[ \lim_{s\rightarrow\infty} \lim_{t\rightarrow\infty} \frac{V^z(x,s,t)}{V^z_t} \1_{\left\{\left(\gamma^{(z,\sqrt{2})}\right)^c\right\}} +  \frac{Z(x)}{Z_{\infty}} \1_{\left\{\left(\gamma^{(z,\sqrt{2})}\right)^c\right\}} \right]\\
       	    &\leq \lim_{z\rightarrow\infty} \E[\1_{\left\{\left(\gamma^{(z,\sqrt{2})}\right)^c\right\}}+\1_{\left\{\left(\gamma^{(z,\sqrt{2})}\right)^c\right\}}] = 0.
        \end{align}
        By \eqref{eq:spine_decomposition} and the bounded convergence theorem, we have
        \begin{align}
       	    &\E\left[\lim_{s\rightarrow\infty} \lim_{t\rightarrow\infty} \frac{V^z(x,s,t)}{V^z_t} \right] = \lim_{s\rightarrow\infty} \lim_{t\rightarrow\infty} \E\left[ \frac{V^z(x,s,t)}{V^z_t} \right]\\
       	    &= \lim_{s\rightarrow\infty} \lim_{t\rightarrow\infty} \E\left[ \sum_{u\in N_t} \frac{(\sqrt{2}t-X_u(t)+z) e^{\sqrt{2}(X_u(t)-\sqrt{2}t)}\1_{\{\inf\limits_{r\leq t} (\sqrt{2}r - X_u(r) + z) \geq 0 \} } }{V_t^z} \1_{\left\{ \gamma_u(s) \geq -x  \right\} } \right]\\
       	    &= \lim_{s\rightarrow\infty} \lim_{t\rightarrow\infty} \E^{(z,\sqrt{2})}\left[ \sum_{u\in N_t} \P^{(z,\sqrt{2})}(\xi_t = u \mid \F_t) \1_{\left\{ \inf\limits_{r\leq s} (X_u(r)-\rho r) \geq -x  \right\} } \right]\\
       	    &= \lim_{s\rightarrow\infty} \lim_{t\rightarrow\infty}  \P^{(z,\sqrt{2})}\left( \inf\limits_{r\leq s} (\xi_r-\rho r) \geq -x   \right)\\
       	    &= \P^{(z,\sqrt{2})}\left( \inf\limits_{r\in [0,\infty)} (\xi_r-\rho r) \geq -x   \right)\\
       	    &= \P^{(z,\sqrt{2})}\left( \inf\limits_{t\in [0,\infty)} \left( z+(\sqrt{2}-\rho)t - (z+\sqrt{2}t-\xi_t) \right) \geq -x  \right).
        \end{align}
        We know that under $\P^{(z,\sqrt{2})}$, $\{z+\sqrt{2}t-\xi_t: t\geq 0\}$ is a Bessel-3 process starting from $z$.

        Define a new measure $\bP^{z}$ by
        \begin{align}
            \frac{\d \bP^{z}}{\d \bP} \bigg{|}_{\F_t^B} = \frac{B_t+z}{z} \1_{\left\{\inf_{s\in [0,t]} B_s > -z \right\}}.
        \end{align}
        According to \cite{Imhof84}, $\{z+B_t, t\geq 0, \bP^z\}$ is a Bessel-3 process starting from $z$. Thus
        \begin{align}
       	    &\P^{(z,\sqrt{2})}\left( \inf\limits_{t\in [0,\infty)} \left( z+(\sqrt{2}-\rho)t - (z+\sqrt{2}t-\xi_t) \right) \geq -x  \right)\\
       	    =\,& \bP^z \left( \inf\limits_{t\in [0,\infty)} \left( z+(\sqrt{2}-\rho)t - (z+B_t) \right) \geq -x  \right)
        \end{align}
        Let $\{(B_t^1,B_t^2,B_t^3), t\geq 0, \bP^z \}$ be three independent Brownian motions starting from the origin. Then, it is easy to verify that
        \begin{equation}
        	\left\{ (z+B_t^1)^2+(B_t^2)^2+(B_t^3)^2, t\geq 0;\, \bP^z \right\} \overset{d.}{=} \left\{ (z+B_t)^2, t\geq 0; \, \bP^z \right\}.
        \end{equation}
        Note that for any $z>0$,
        \begin{equation}
        	\sqrt{(z+B_t^1)^2+(B_t^2)^2+(B_t^3)^2}-z \leq \sqrt{(B_t^1)^2+(B_t^2)^2+(B_t^3)^2}
        \end{equation}
        and $\sqrt{(B_t^1)^2+(B_t^2)^2+(B_t^3)^2}$ eventually grows no faster than $t^{1/2+\epsilon}$ for any $\epsilon>0$. Therefore,
        \begin{align}
        	 &\lim_{T\rightarrow\infty} \lim_{z\rightarrow\infty} \bP^z \left( \inf\limits_{t\in (T,\infty)} \left( z+(\sqrt{2}-\rho)t - (z+B_t) \right) < -x  \right)\\
        	 &\leq  \lim_{T\rightarrow\infty}\bP^z \left( \inf\limits_{t\in (T,\infty)} \left( (\sqrt{2}-\rho)r - \sqrt{(B_t^1)^2+(B_t^2)^2+(B_t^3)^2} \right) < -x  \right) = 0.
        \end{align}
        On the other hand, by the bounded convergence theorem, we have
        \begin{align}
        	&\,\lim_{T\rightarrow\infty} \lim_{z\rightarrow\infty} \bP^z \left( \inf\limits_{t\in [0,T]} \left( z+(\sqrt{2}-\rho)t - (z+B_t) \right) < -x  \right)\\
        	&=\lim_{T\rightarrow\infty} \lim_{z\rightarrow\infty} \bE \left( \frac{z+B_T}{z} \1_{\left\{\inf_{t\in [0,T]} B_t > -z \right\}} \1_{\left\{ \inf_{t\in [0,T]} \left( z+(\sqrt{2}-\rho)t - (z+B_t) \right) < -x \right\}} \right)\\
        	&=\lim_{T\rightarrow\infty} \bE \left( \lim_{z\rightarrow\infty}  \frac{z+B_T}{z} \1_{\left\{\inf_{t\in [0,T]} B_t > -z \right\}} \1_{\left\{ \inf_{t\in [0,T]} \left( (\sqrt{2}-\rho)t - B_t \right) < -x \right\}} \right)\\
        	&= \lim_{T\rightarrow\infty} \bP \left(  \inf_{t\in [0,T]} \left( (\sqrt{2}-\rho)t - B_t \right) < -x  \right)\\
        	&= e^{-2(\sqrt{2}-\rho)x},
        \end{align}
        where the last equality follows from \cite[equation (3.5.13)]{Karatzas}.
        Note that
        \begin{align}
        	& \lim_{T\rightarrow\infty} \lim_{z\rightarrow\infty} \bP^z \left( \inf\limits_{t\in [0,T]} \left( z+(\sqrt{2}-\rho)t - (z+B_t) \right) < -x  \right)\\
        	&\leq \lim_{z\rightarrow\infty} \bP^z \left( \inf\limits_{t\in [0,\infty)} \left( z+(\sqrt{2}-\rho)t - (z+B_t) \right) < -x  \right)\\
        	&\leq \lim_{T\rightarrow\infty} \lim_{z\rightarrow\infty} \bP^z \left( \inf\limits_{t\in [0,T]} \left( z+(\sqrt{2}-\rho)t - (z+B_t) \right) < -x  \right)\\
        	&\quad+\lim_{T\rightarrow\infty} \lim_{z\rightarrow\infty} \bP^z \left( \inf\limits_{t\in (T,\infty)} \left( z+(\sqrt{2}-\rho)t - (z+B_t) \right) < -x  \right).
        \end{align}
        Therefore,
        \begin{align}
            \lim_{z\rightarrow\infty} \bP^z \left( \inf\limits_{t\in [0,\infty)} \left( z+(\sqrt{2}-\rho)t - (z+B_t) \right) < -x  \right) = e^{-2(\sqrt{2}-\rho)x}
        \end{align}
        and
        \begin{align}
            \E\left[\frac{Z^{-\rho}(x)}{Z_{\infty}} \right] &= \lim_{z\rightarrow\infty} \P^{(z,\sqrt{2})}\left( \inf\limits_{t\in [0,\infty)} \left( z+(\sqrt{2}-\rho)t - (z+\sqrt{2}t-\xi_t) \right) \geq -x  \right)\\
            &=\lim_{z\rightarrow\infty} \bP^z \left( \inf\limits_{t\in [0,\infty)} \left( z+(\sqrt{2}-\rho)t - (z+B_t) \right) \geq -x  \right)
            = 1-e^{-2(\sqrt{2}-\rho)x}.
        \end{align}
        Hence, \eqref{eq:Zx_Zinfinity} holds.

       (iii)
        Note that $0\leq Z^{-\rho}(x) \leq Z_{\infty}$. Using the bounded convergence theorem, \eqref{eq:Zx_Zinfinity} yields that
       	\begin{equation}
        	\E\left[ \lim_{x\rightarrow\infty} \frac{Z^{-\rho}(x)} {Z_{\infty}}\right] = \lim_{x\rightarrow\infty}	\E\left[ \frac{Z^{-\rho}(x)} {Z_{\infty}}\right] = \lim_{x\rightarrow\infty}(1-e^{-2(\sqrt{2}-\rho)x}) = 1.
       	\end{equation}
       	Combining this equation and $\lim_{x\rightarrow\infty} Z^{-\rho}(x)/Z_{\infty} \leq 1$ $\P$-almost surely, we have
       	\begin{equation}
       	    \lim_{x\rightarrow\infty} Z^{-\rho}(x) = Z_{\infty} \mbox{ $\P$-a.s.}
       	\end{equation}
       	This completes the proof.
    \end{proof}

    Now, we prove Corollary \ref{cor_extremal}.
    \begin{proof}[Proof of Corollary \ref{cor_extremal}]
    (i)	Theorem \ref{thrm1} yields that for any
    	$f\in C_c^+(\R\times\R_+)$,
    	\begin{align}\label{eq:extremal_conver_DPPP}
    		&\lim_{t\rightarrow\infty} \E \left( e^{ -\sum_{u\in N_t} f(X_u(t)-m_t, -\gamma_u(t)) } \right)\\
    		&=  \E \left( \exp\left\{-\int_{y\in\R, z\in \R_+} \sqrt{2}C_* e^{-\sqrt{2}y} \E\left(1-e^{-\left\langle f(\cdot+y,\cdot+z),\, \widetilde{\mathrm{D}}^{\sqrt{2}} \right\rangle }\right) Z^{-\rho}(\d z)\d y \right\} \right),
    	\end{align}
    	where $\widetilde{\mathrm{D}}^{\sqrt{2}}$ is a random measure with law $\widetilde{\D}^{\sqrt{2}}$, and $\langle f,\mu \rangle$ denotes the integral of $f$ with respect to $\mu$ for some measurable function $f$ and $\sigma$-finite measure $\mu$.
    	For any continuous function $g:\R\to\R_+$ with compact support, take
    	\begin{equation}
    		f(y,z) = g(y) \1_{(0,x)}(z).
    	\end{equation}
    	Since the indicator function can be approximated from below by a non-decreasing sequence of continuous functions, the monotone convergence theorem ensures that \eqref{eq:extremal_conver_DPPP} is valid for this $f$.
    	Then, it follows from \eqref{eq:extremal_conver_DPPP} and the definition of $\widetilde{\D}^{\sqrt{2}}$ that
    	\begin{align}\label{eq:extremal_BBMAB_Lapalce}
    		&\lim_{t\rightarrow\infty} \E \left( e^{ -\sum_{u\in N_t} g(X_u(t)-m_t) \1_{\{-\gamma_u(t)\leq x\}}} \right)\\
    		&=  \E \left( \exp\left\{-\int_{y\in\R, z\in \R_+} \sqrt{2}C_* e^{-\sqrt{2}y} \E\left(1-e^{-\left\langle f(\cdot+y,\cdot+z),\, \widetilde{\mathrm{D}}^{\sqrt{2}} \right\rangle }\right) Z^{-\rho}(\d z)\d y \right\} \right)\\
    		&=  \E \left( \!\exp\left\{\!-\!\int_{y\in\R, z\in \R_+}\!\!\! \sqrt{2}C_* e^{-\sqrt{2}y} \E\left(1-e^{-\left\langle g(\cdot+y),\, \mathrm{D}^{\sqrt{2}} \right\rangle \1_{\{z\in (0,x)\}} }\right) Z^{-\rho}(\d z)\d y \right\} \!\right)\\
    		&= \E \left( \exp\left\{-\int_{z\in(0,x)}Z^{-\rho}(\d z) \int_{y\in\R} \sqrt{2}C_* e^{-\sqrt{2}y} \E\left(1-e^{-\left\langle g(\cdot+y),\, \mathrm{D}^{\sqrt{2}} \right\rangle }\right) \d y \right\} \right)\\
    		&=  \E\! \left(\! \exp\left\{ \!\!-\!\int_{y\in\R}\!\!\!\sqrt{2}C_* Z^{-\rho}(x) e^{-\sqrt{2}y} \E\left(1-e^{-\left\langle g(\cdot+y),\, \mathrm{D}^{\sqrt{2}} \right\rangle }\right) \d y \right\} \!\right),
    	\end{align}
    	where $\mathrm{D}^{\sqrt{2}}$ is a random  measure with law $\D^{\sqrt{2}}$.
    This proves (i).

    (ii) It follows from Proposition \ref{prop:proportion} that
    	\begin{equation}
    		\lim_{x\rightarrow\infty} Z^{-\rho}(x) = Z_{\infty}\quad  \mbox{ $\P$-a.s.}
    	\end{equation}
    	Then, letting $x\to\infty$  on the both side of \eqref{eq:extremal_BBMAB_Lapalce}, we get that
    	\begin{align}
    		&\lim_{x\rightarrow\infty}
    		\lim_{t\rightarrow\infty} \E \left( e^{ -\sum_{u\in N_t} g(X_u(t)-m_t) \1_{\{-\gamma_u(t)\leq x\}}} \right)\\
    		= &\, \E \left( \exp\left\{ -\int_{y\in\R} \sqrt{2}C_* Z_{\infty} e^{-\sqrt{2}y} \E\left(1-e^{-\left\langle g(\cdot+y),\, \mathrm{D}^{\sqrt{2}} \right\rangle }\right) \d y \right\} \right).
    	\end{align}
    This completes the proof of (ii).
    \end{proof}

	\appendix
	\section{Proof of Lemma \ref{lemma:many-to-one} $\mathrm{(i)}$}\label{sec:appendix_manytoone}

    \begin{proof}[Proof of Lemma \ref{lemma:many-to-one} $\mathrm{(i)}$]
    We only prove \eqref{many-to-one}  for $F$ support by $[0, a)$ for some $a>0$ since the general case then follows from the monotone convergence theorem.

    Note that   $(\{(t, B_t)\}_{t\geq 0}, \bP_{r,x})$ is a time-nonhomogeneous strong Markov process, where $\bP_{r,x}$ is the law of $(t, B_t)$ starting from $(r,x)\in[0,\infty) \times \R$. Now we consider a branching Markov process as introduced at the beginning of the paper with  $(\{B(t)\}_{ t\geq 0}, \mathbf{P})$ replaced by $(\{(t, B_t)\}_{t\geq 0}, \bP_{r,x})$,  and other rules unchanged. We use $\P_{r,x}$ to denote  the law of branching Markov process starting from one particle at $(r,x)$. Set
    $$
    Y:=\sum_{u\in N_D} F(\tau_D(u), X_u(\tau_D(u)))\mathbf{1}_{(\tau_D(u)<\infty)}
    $$ and
	\begin{equation}
		u(r,x) := \P_{r,x}\left(\sum_{u\in N_D} F(\tau_D(u), X_u(\tau_D(u)))\mathbf{1}_{(\tau_D(u)<\infty)}\right).
	\end{equation}
    For $\lambda\geq 0$, define
	\begin{align}\label{def_v(r,x)}
		v_{\lambda}(r,x) = -\log \P_{r,x} e^{-\lambda Y}.
	\end{align}
	It follows from \cite[Theorem 1.4]{Dynkin01} that
	$v_{\lambda}(r,x)$ satisfies the equation
	\begin{equation}\label{eq:v_integral_equation}
		e^{-v_{\lambda}(r,x)} = \bP_{r,x} \left[ \int_r^{\tau_D} \phi(e^{-v_{\lambda}(s,B_s)}) \d s + e^{-\lambda F(\tau_D, B_{\tau_D})}\mathbf{1}_{(\tau_D<\infty)} \right],\quad x\in D,
	\end{equation}
	where $\phi(z) = \sum_{n=0}^{\infty} p_n z^n - z$. Note that $\phi'(1) = \sum_{n=0}^{\infty} np_n-1 = 1$. Differentiating both sides of \eqref{eq:v_integral_equation} with respect to $\lambda$, we get that
	\begin{align}
		&\P_{r,x}\left(Y e^{-\lambda Y}\right)\\
		=& \bP_{r,x} \left[ F(\tau_D, B_{\tau_D}) e^{-\lambda F(\tau_D, B_{\tau_D})}\mathbf{1}_{(\tau_D<\infty)} + \int_r^{\tau_D} \phi'(e^{-v_{\lambda}(s,B_s)})\times\P_{s,B_s}\left(Y e^{-\lambda Y}\right) \d s\right].
	\end{align}
	Then, letting $\lambda = 0$, we get
	$u(r,x)$ satisfies the following Schr\"{o}dinger equation:
	\begin{equation}\label{eq:u_integral}
		u(r,x) = \bP_{r,x} \left[ F(\tau_D, B_{\tau_D})\mathbf{1}_{(\tau_D<\infty)} + \int_r^{\tau_D} u(s,B_s) \d s\right],\quad x\in D.
	\end{equation}
    By \cite[Lemma A.I.1.5]{Dynkin93}
    the solution to  above Schr\"{o}dinger is unique and given by
    \begin{equation}\label{FK}
        u(r,x)=\bP_{r,x} \left[ e^{\tau_D-r}  F(\tau_D,B_{\tau_D}), \tau_D<\infty \right].
    \end{equation}
    Letting $r=0$, we get \eqref{many-to-one}.

    \end{proof}

    \section{Proof of Lemma \ref{lemma:technical}]}\label{sec:appendix}
	\begin{proof}[Proof of Lemma \ref{lemma:technical}]
		For any $a,b\in \R$, we define
				\begin{align}
            {\tau}_{a}^{b} := \inf\{s\geq 0: a+B_s \leq b s \}. \label{def_tau_rho}
		\end{align}
		For any $u\in N_t$, define
		\begin{equation}\label{def_tau_u}
            {\tau}_{a}^{b}(u): = \inf\{s\in [0,t]: a+X_u(s) \leq b s \}.
		\end{equation}
        Then for any fixed $p\in(0, 1)$, we have
		\begin{align}
			&\hspace{1.3em} \P \left(\exists u\in N_t: x+X_u(t) \geq m_t+y, \inf_{r\in [0,t]} (x+X_u(r)-\rho r)<0 \right)\\
			&\leq \P \left(\exists u\in N_t: x+X_u(t) \geq m_t+y, {\tau}_{x}^{\rho}(u) \in [0,pt] \right)\\
			&\hspace{1.3em}+ \P \left(\exists u\in N_t: X_u(t) \geq m_t+y, {\tau}_{x}^{\rho}(u)\in (pt,t] \right)\\
			&=: II_1 + II_2.
		\end{align}
		For the second term, it follows from \eqref{many-to-one'} and the Markov property that
		\begin{align}
			II_2 =&\,\P \left(\exists u\in N_t: x+X_u(t) \geq m_t+y, {\tau}_{x}^{\rho}(u)\in (pt,t] \right) \\
			\leq&\, \E\left[ \sum_{u\in N_t} \1_{\left\{ {\tau}_{x}^{\rho}(u)\in (pt,t],\, x+X_u(t)\geq m_t + y \right\} } \right]\\
			\leq &\, \bE \left[e^t\1_{\{ {\tau}_{x}^{\rho}\in (pt,t], \, x+B_t\geq m_t-A \}}\right]\\
			=&\, \bE\left[ e^t 1_{\{{\tau}_{x}^{\rho}\in (pt,t]\} } \bP(x+z+B_{t-r}\geq m_t+y)\mid_{z=B_r,r=\underline{\tau}_{x}^{\rho}} \right]\\
			=&\, e^t\int_{pt}^t  \bP({\tau}_{x}^{\rho}\in\d r)  \bP\left(B_{t-r}\geq \sqrt{2}(t-r) + (\sqrt{2}-\rho)r -\frac{3}{2\sqrt{2}}\log t + y\right),
		\end{align}
        where for the last equality we used the fact that $x+B_{{\tau}_{x}^{\rho}} = \rho r$       when ${\tau}_{x}^{\rho} = r$.
		For any $z\in\R$, an application of Markov's inequality shows that
		\begin{equation}
			e^{t-r} \bP(B_{t-r} \geq \sqrt{2}(t-r) + z) \leq e^{-\sqrt{2}z}.
		\end{equation}
		It follows from \cite[Section 3.5.C]{Karatzas} that
		\begin{equation}\label{density-tau}
			\bP({\tau}_{x}^{\rho}\in \d r) = \frac{x}{\sqrt{2\pi r^3}} \exp\left\{ - \frac{(x-\rho r)^2}{2r} \right\}.
		\end{equation}
		Therefore,
        \begin{align}\label{eq:estimate_II_2}
        	II_2 &\leq \int_{pt}^t e^r \frac{x}{\sqrt{2\pi r^3}} e^{ - \frac{(x-\rho r)^2}{2r} } e^{-(2-\sqrt{2}\rho)r + \frac{3}{2} \log t - \sqrt{2}y}\d r\\
        	&= \int_{pt}^t \frac{x}{\sqrt{2\pi }} \frac{t^\frac{3}{2}}{r^\frac{3}{2}} e^{-(\frac{\rho}{\sqrt{2}}-1)^2r - \frac{x^2}{2r} + x\rho - \sqrt{2}y}\d r\\
        	&\leq
        \frac{1}{\sqrt{2\pi }}
        e^{-\sqrt{2}y} x e^{-\frac{x^2}{2r}+x\rho} \int_{pt}^{t} \frac{1}{p^{\frac{3}{2}}} e^{-(\frac{\rho}{\sqrt{2}}-1)^2r} \d r \rightarrow 0, \mbox{ as } t\rightarrow\infty.
        \end{align}
	
	    For any $a,b\in \R$, define
	    \begin{equation}
          {D}_a^b:=\{(t,y):t> 0, a+y > bt\}.
	    \end{equation}
        Recall the definition  of the stopping line given by \eqref{def_stopping_line} and of the set of particles stopped on the line given by \eqref{def_stopping_set}. Note that
	    \begin{align}
	    	II_1 &= \P\left( \exists u\in N_t: {\tau}_{x}^{\rho}(u)\in [0,pt], x+X_u(t) \geq m_t +y \right)\\
	    	& = \P\left( \exists v\in N_{{D}_x^{\rho}}: {\tau}_x^{\rho}(v)\in [0,pt],\, \exists u\succ v, u\in N_t, x+X_u(t) \geq m_t+y \right)
	    \end{align}
	    and this probability is less than or equal to the expectation of the number of $v\in N_{{D}_x^{\rho}}$ which satisfies ${\tau}_x^{\rho}(v)\in [0,pt]$ and $\exists u\succ v, u\in N_t, X_u(t) \geq m_t+y$, we have the following estimate
	    \begin{align}
	    	II_1 &\leq \E \bigg{[}\sum_{v\in N_{{D}_x^{\rho}}} \1_{\{{\tau}_x^{\rho}(v)\in [0,pt],\, \exists u\succ v, u\in N_t, x+X_u(t) \geq m_t+y \}}\bigg{]}\\
	    	&\leq \E \Bigg{[}\E\bigg{(} \sum_{v\in N_{{D}_{x}^{\rho}}} \1_{\{{\tau}_x^{\rho}(v)\in [0,pt],\, \exists u\succ v, u\in N_t, x+X_u(t) \geq m_t+y \}} |	    	\F_{\tau_{x}^{\rho}}\bigg{)}\Bigg{]}\\
	    	&= \E \Bigg{[}\sum_{v\in N_{{D}_{x}^{\rho}}} \1_{\{{\tau}_x^{\rho}(v)\in [0,pt]\}} \P(z+M_{t-r}(v) \geq m_t+y) \mid_{z = X_v(r), r={\tau}_x^{\rho}(v)} \Bigg{]},
	    \end{align}
	    where $\F_{{\tau}_{x}^{\rho}}$ is the  filtration
	    generated by the spatial paths and the number of offspring of the individuals before hitting the stopping line $L_{{D}_{x}^{\rho}}$ and $M_{t-{\tau}_x^{\rho}(v)}(v) = \max_{u\succ v, u\in N_t} X_u(t)$. By the  many-to-one formula \eqref{many-to-one} and Lemma \ref{lemma:M_tail}, we have
	    \begin{align}
	    	II_1 &\leq \int_0^{pt} e^r \bP_x({\tau}_{x}^{\rho}\in \d r) \P(\rho r + M_{t-r}(v)\geq m_t+y)\\
	    	&= \int_0^{pt} e^r \bP({\tau}_{x}^{\rho}\in \d r) \P(M_{t-r}(v)\geq m_{t-r} + \sqrt{2}r - \frac{3}{2\sqrt{2}}\log \frac{t}{t-r} + y -\rho r)\\
	    	&\leq \int_0^{pt} e^r \frac{x}{\sqrt{2\pi r^3}} e^{ -\frac{(x-\rho r)^2}{2r} } b (\sqrt{2}r-\rho r+y) e^{-\sqrt{2}\left( \sqrt{2}r - \rho r - \frac{3}{2\sqrt{2}}\log \frac{t}{t-r} + y \right)+1} \d r\\
	    	&\leq Ce^{-\sqrt{2}y} \int_0^{pt} \frac{x}{\sqrt{2\pi r^3}} (\sqrt{2}r-\rho r+y) \left( \frac{t}{(1-p)t} \right)^{\frac{3}{2}} e^{-\frac{(x-(\sqrt{2}-\rho)r)^2}{2r}+(2\rho-\sqrt{2})x} \d r,
        \end{align}
    where $C$ is a positive constant. Using \eqref{density-tau}, we get
    \begin{align}\label{eq:estimate_II_1}
	    II_1 	&\leq Ce^{-\sqrt{2}y+(2\rho-\sqrt{2})x} \left[ x\bE\left({\tau}_{x}^{\sqrt{2}-\rho} \1_{\{{\tau}_{x}^{\sqrt{2}-\rho} \leq pt \}}\right) + xy \bE\left( \1_{\{{\tau}_{x}^{\sqrt{2}-\rho} \leq pt \}}\right) \right]\\
            &\leq C\left(\frac{1}{\sqrt{2}-\rho}\vee 1\right)
            (x^2+xy)e^{-\sqrt{2}y+(2\rho-\sqrt{2})x},
	    \end{align}
	    where the last inequality relies on the identity $\bE[\underline{\tau}_{x}^{\sqrt{2}-\rho}] = \frac{x}{\sqrt{2}-\rho}$, derived in \cite[Exercise 3.5.10]{Karatzas}. Combining \eqref{eq:estimate_II_2} and \eqref{eq:estimate_II_1}, we complete the proof.
	\end{proof}
	
	\vspace{.1in}
	\textbf{Acknowledgment}:
		We thank Bastien Mallein for raising this question and proposing the approach of studying the convergence of the joint point process at the conference on branching processes in Shenzhen MSU-BIT University. Yan-Xia Ren  is supported by  NSFC (Grant Nos. 12231002 and 12631006) and the Fundamental Research Funds for Central Universities, Peking University LMEQF. Renming Song's research  was supported in part by a grant from the Simons Foundation (\#960480, Renming Song).
		Fan Yang is supported by the Fundamental Research Funds for the Central Universities.

\end{document}